\newcommand{\veryshortrightarrow}[1][3pt]{\mathrel{%
   \hbox{\rule[\dimexpr\fontdimen22\textfont2-.2pt\relax]{#1}{.4pt}}%
   \mkern-4mu\hbox{\usefont{U}{lasy}{m}{n}\symbol{41}}}}
\newcommand{\veryshortleftarrow}[1][3pt]{\mathrel{%
   \hbox{\usefont{U}{lasy}{m}{n}\symbol{40}}%
   \mkern-4mu\hbox{\rule[\dimexpr\fontdimen22\textfont2-.2pt\relax]{#1}{.4pt}}%
}}
\newcommand{\scriptveryshortarrow}[1][3pt]{{%

		\hbox{\rule[\scriptratio\dimexpr\fontdimen22\textfont2-.2pt\relax]
			{\scriptratio\dimexpr#1\relax}{\scriptratio\dimexpr.4pt\relax}}%
		\mkern-4mu\hbox{\let\f@size\sf@size\usefont{U}{lasy}{m}{n}\symbol{41}}}}
\newcommand{\fromsource}{_{\veryshortrightarrow}}
\newcommand{\todest}{_{\veryshortleftarrow}}
\newcommand{\bO}{\mathcal{O}}
\newcommand{\Od}{\mathcal{O}}
\newcommand{\Id}{\mathcal{I}}
\newcommand{\Ad}{{\mathcal A}}
\newcommand{\R}{\mathbb{R}}
\newcommand{\F}{{\mathcal{F}}}
\newcommand{\Rm}{\R_{\max}}
\newcommand{\Rmb}{\overline{\R}_{\max}}
\newcommand{\W}{\mathcal{W}}
\newcommand{\Z}{\mathcal{Z}}
\newcommand{\X}{\mathcal{X}}
\newcommand{\Y}{\mathcal{Y}}
\newcommand{\U}{\mathcal{U}}
\newcommand{\Ba}{\textit{Base}}
\newcommand{\Te}{\textit{Test}}
\newcommand{{\Mw}}{\mathcal{M}}
\newcommand{\Nw}{\mathcal{N}}
\def\<#1,#2>{\langle #1, #2\rangle}
\newcommand{\Ss}{\mathcal{S}}
\newcommand{\T}{\mathcal{T}}
\newcommand{\Space}{\mathcal{C}_{spa}}
\newenvironment{skproof}{{\noindent\it Sketch of Proof.}}{\hfill $\square$\par}
\newtheorem{theorem}{Theorem}[section]
\newtheorem{proposition}{Proposition}[section]
\theoremstyle{definition}
\newtheorem{definition}{Definition}[section]
\newtheorem{assumption}{Assumption}[section]
\theoremstyle{remark}
\begin{document}
 \title[Adaptive Multi-Level Max-Plus Method]{An Adaptive Multi-Level Max-Plus Method for Deterministic Optimal Control Problems}
 
 \author{Marianne Akian}
 \author{St\'ephane Gaubert}
 \address[Marianne Akian and St\'ephane Gaubert]{Inria and CMAP, \'Ecole polytechnique, IP Paris, CNRS}
 \email{Marianne.Akian@inria.fr}
 \email{Stephane.Gaubert@inria.fr}
 \author{Shanqing Liu}
 \address[Shanqing Liu]{CMAP, \'Ecole polytechnique, IP Paris, CNRS, and Inria}
 \email{Shanqing.Liu@polytechnique.edu}

\begin{abstract}                %
  We introduce a new numerical method to approximate the solution of a finite horizon deterministic optimal control problem. We exploit two Hamilton-Jacobi-Bellman PDE, arising by considering the dynamics in forward and backward time. This allows us to compute a neighborhood of the set of optimal trajectories, in order to reduce the search space. The solutions of both PDE are successively approximated
  by max-plus linear combinations of appropriate basis functions, using a
  hierarchy of finer and finer grids. 
  We show that the sequence of approximate value functions obtained in
  this way does converge to the viscosity solution of the HJB equation in a
  neighborhood of optimal trajectories.
  Then, under certain regularity assumptions, we show that the number of arithmetic operations needed to compute an approximate optimal solution of a $d$-dimensional problem, up to a precision $\varepsilon$, is bounded by $O(C^d (1/\varepsilon) )$, for some constant $C>1$,
  whereas ordinary grid-based methods have a complexity in
  $O(1/\varepsilon^{ad}$) for some constant $a>0$. 
\end{abstract}

\maketitle

%
\todo[inline]{SG: I edited the end of the abstract to emphasize the benefit of plog growth  in the title, to emphasize the multi-level nature. Implemented.}
\todo[inline]{SG: what about saying ``An adaptative multi-level Max-Plus..''
  in the title, to emphasize the multi-level nature. Implemented}
\section{Introduction}
We are interested in numerically solving a finite horizon deterministic optimal control problem. Such a problem is associated to a Hamilton-Jacobi-Bellman (HJB) equation via the Bellman dynamic programming principle  (see for instance \cite{flemingsoner}).
The value function of this class of optimal control problems has been characterized as the solution of the associated HJB PDE in the viscosity sense (\cite{crandall1983viscosity}). Several classes of numerical methods have been proposed to solve such PDE. Among them, we mention the finite difference schemes introduced in \cite{crandall1984two}, and the semi-lagrangian schemes, studied in particular in (\cite{falcone1987numerical}, \cite{falcone2013semi}). %

More recently, max-plus based discretization schemes have been developed
by \cite{fleming2000max}, \cite{akian2008max}, \cite{Mc2007}, \cite{Mc2006}, \cite{qu2014contraction}, \cite{dower}. These methods take advantage of the max-plus linearity of the evolution semigroup of the HJB PDE, the so called {\em Lax-Oleinik semigroup}.
After a time discretization, this allows one to approximate the value function for a given horizon, by a supremum of appropriate basis functions, for instance quadratic forms. Such suprema are propagated by the action of the Lax-Oleinik semigroup, between two successive time steps.
\cite{Mc2007} showed that the max-plus based methods have the advantages to attenuate the \textit{curse-of-dimensionality} in some structured cases, including switched control problems, see also~\cite{eneaneyphys}, and~\cite{qu2014contraction} for further complexity results. Other attempts to reduce \textit{curse-of-di\-men\-sio\-na\-li\-ty} include the computation of the value function at one given point by constructing the grid from the possible trajectories and reducing the set of trajectories using Lipschitz continuity properties, together with the low dimensionality of the control set, like in \cite{alla2019efficient}, \cite{alla2020tree}, 
 and \cite{bokanowski2022optimistic}.

  In this paper, we address the curse-of-dimensionality issues with another approach. The main idea is to consider a hierarchy of finer and finer irregular grids, concentrated around optimal trajectories, thus allowing us to dynamically reduce the search space, while increasing the precision. This is achieved by considering a pair of HJB PDE, associated to two optimal control
  problems: one
  with a forward dynamics, fixed initial state and free final state,
  and a dual one, with a backward dynamics, fixed final state and free
  initial state. The value functions of these two PDE allow us
  to compute a family of nested neighborhoods of optimal trajectories.
  Then, we adaptively add new basis functions, from one grid level to the next one, to refine the approximation. These new basis functions are chosen to be concentrated near the optimal trajectories of the control problem, and the refined neighborhood
  of optimal trajectories is computed from the solutions of the two HJB PDE
  in the coarser grid.

  We show that using our algorithm, the number of basis functions needed to get a certain error $\varepsilon$ is considerably reduced. Indeed, for a $d$-dimensional problem, under certain regularity assumptions, we get a complexity bound
  of $C^d(1/ \varepsilon) $ arithmetic operations, for some constant $C>1$.
  This should be compared with methods based
  on regular grids, which yield complexity bounds
  of order $\bO(1/\varepsilon^{ad})$ in which $a>0$ depends
  on regularity assumptions and on the order of the scheme (see for instance~\cite{bardi2008optimal}). %
  With our adaptative method, the curse
  of dimensionality remains only present in the term $C^d$, in particular
  the complexity becomes linear in the bit-size of the numerical precision
  $\varepsilon$.
  
  The present work extends the idea of dynamic grid refinement, originally
  presented in\-~\cite{akian2023multi} to solve semi-Lagrangian discretizations
  of special, minimal time, problems. Here, we exploit max-plus approximations
  combined with direct methods, allowing a higher degree of accuracy,
  and we adress finite horizon problems with more general cost and dynamics structure.

\section{Optimal control problem, HJB equation, characterization of optimal trajectories}\label{sec_subdomain}
We intend to solve the following finite horizon deterministic optimal control problem:
\begin{equation}\label{problem}
	\max \left\{ \int_0^T \ell(x(s),u(s))ds + \phi_0(x(0))+\phi_T(x(T)) \right\}
\end{equation}
over the set of trajectories $(x(s),u(s))$ satisfying:
\begin{equation}\label{dynamic&constraint}
\left\{
\begin{aligned}
& \dot{x}(s) = f(x(s),u(s)) \ , \\
& x(s) \in X, \  u(s) \in U   \ ,
\end{aligned}	
	\right.
\end{equation}
for all $ s \in[0,T]$. Let us denote $v^*$ the maximum in \eqref{problem}. Here, $X \subset\R^d$, assumed to be bounded, is the state space and $U\subset \R^m$ is the control space. We further assume that the running cost $\ell: X \times U \mapsto \R$, the dynamics $f:X \times U \mapsto \R$, the initial and final cost $\phi_0,\phi_T:X\mapsto \R$ are sufficiently regular: bounded, continuous and Lipschitz w.r.t.\ all variables. 

A well known sufficient and necessary optimality condition for the above problem is given by the Hamilton-Jacobi-Bellman equation, which is deduced from the dynamic programming principle. Indeed, we consider the value function $v\todest$,
defined as follows,
for any $(x,t) \in X \times [0,T]$:
\begin{equation}\label{value_todest}
v\todest(x,t) = \sup \left\{ \int_t^T \ell(x(s),u(s))ds + \phi_T(x(T)) \right\}	 \ ,
\end{equation}
under the constraint \eqref{dynamic&constraint} with the initial state $x(t)=x$.
Here, the symbol $"\todest"$ indicates that $(x,t)$ is the {\em source},
so that the corresponding HJB PDE is of a backward nature.
Indeed, $v\todest$ is known to be the viscosity solution of the following HJB equation (see for instance \cite{flemingsoner}):
\begin{equation}\label{HJB_todest}
\left\{
\begin{aligned}
&-\frac{\partial v\todest}{\partial t} - H(x,\nabla v\todest) = 0, \ &(x,t)\in X \times[0,T] \ , \\
&v\todest(x,T) = \phi_T(x), \ &x \in X \ ,
\end{aligned}	
\right.	
\end{equation}
where $H(x,p) = \sup_{u \in U} \{ p \cdot f(x,u) + \ell(x,u)\}$ is the Hamiltonian of the problem. Once \eqref{HJB_todest} is solved, one can easily obtain the value of the original problem \eqref{problem} by further taking the maximum over $X$, i.e., 
\begin{equation}\label{opt_todest}
	v^* = \max_{x \in X} \{ \phi_0(x) + v\todest(x,0)  \} \ .
\end{equation}
We shall also use another, equivalent, optimality condition for problem \eqref{problem}, obtained by applying the dynamic programming principle in
the reverse direction. 
This leads us to consider the value function $v\fromsource$, such that
\begin{equation}\label{value_fromsource}
	v\fromsource(x,t) = \sup \left\{ \int_0^t \ell(x(s),u(s))ds + \phi_0(x(0)) \right\} \ ,
\end{equation}
under the same constraint \eqref{dynamic&constraint}, but with the final state $x(t)=x$.
The notation $"\fromsource"$ indicates that $(x,t)$ is now the destination.
Then, $v\fromsource$ is known to be the viscosity solution of the following HJB equation,
in forward time:
\begin{equation}\label{HJB_fromsource}
\left\{
\begin{aligned}
	&  \frac{\partial v\fromsource}{\partial t} - H(x, - \nabla v\fromsource) = 0, \ & (x,t) \in X \times [0,T] \ , \\
	& v\fromsource(x,0) = \phi_0(x) , \   & x \in X \ .
\end{aligned}
\right.	
\end{equation}
Once \eqref{HJB_fromsource} is solved, we can then get the maximum in \eqref{problem} by
\begin{equation}\label{opt_fromsource}
    v^* = \max_{x \in X} \{ \phi_T(x) + v\fromsource(x,T) \} \ .
\end{equation}
The two value functions $v_{\fromsource}$ and $v_{\todest}$ allow us to determine the points belonging to optimal trajectories:
\begin{definition}\label{def_opttra}
  We say that $x^*(\cdot)$ is an optimal trajectory of the optimal control problem~\eqref{problem} if there exists a control $u^*(\cdot)$ such that $(x^*(\cdot),u^*(\cdot))$
  achieves the maximum in~\eqref{problem},
  under the constraint \eqref{dynamic&constraint}.
  We assume that the set of optimal trajectories is non-empty,
and denote, for all $t\in [0,T]$:
	\begin{equation}\label{opt_set}
		\Gamma^*_t = \{x^*(t) \mid \text{ $x^*(\cdot)$ is an optimal trajectory }\} \ ,
	\end{equation}
and $\Gamma^*=\cup_{t\in [0,T]} \Gamma^*_t$.
\end{definition}
Then, we have the following result:
\begin{proposition}\label{pro_opt}
	\begin{equation}\label{char_opt_value}
		v^* = \sup_{x\in X} \{v\fromsource(x,t) + v\todest(x, t) \}, \ \forall t \in [0,T] \ .
	\end{equation}
	Moreover, for all  $t\in [0,T]$, the above supremum is achieved for some $x \in \Gamma^*_t$. Conversely, for all $x \in \Gamma^*_t$, %
the above supremum is achieved at point $x$.
\end{proposition}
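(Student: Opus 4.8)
The plan is to establish the identity~\eqref{char_opt_value} by the classical dynamic programming splitting-and-concatenation argument, and then to read off the two attainment statements as immediate by-products. Fix $t\in[0,T]$ once and for all.

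First I would prove the inequality $v^*\le\sup_{x\in X}\{v\fromsource(x,t)+v\todest(x,t)\}$. Take any admissible pair $(x(\cdot),u(\cdot))$ on $[0,T]$ for~\eqref{problem}--\eqref{dynamic&constraint} and set $x:=x(t)$. Splitting the integral as $\int_0^T=\int_0^t+\int_t^T$ and grouping $\phi_0(x(0))$ with the first part and $\phi_T(x(T))$ with the second, the two groups are respectively admissible competitors in the definition~\eqref{value_fromsource} of $v\fromsource(x,t)$ and in~\eqref{value_todest} of $v\todest(x,t)$, since the restrictions of $(x(\cdot),u(\cdot))$ to $[0,t]$ and to $[t,T]$ satisfy the required constraints and pass through $x$ at time $t$. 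Hence the cost of the pair is $\le v\fromsource(x,t)+v\todest(x,t)$, and taking the supremum over all admissible pairs gives the claimed bound.

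For the reverse inequality I would argue by concatenation. Fix $x\in X$ for which $v\fromsource(x,t)$ and $v\todest(x,t)$ are both finite; since the data are bounded this is the only case that matters ($v^*$ is finite because it is attained, by the non-emptiness assumption, and any other $x$ contributes $-\infty$). Given $\varepsilon>0$, choose an $\varepsilon$-optimal sub-trajectory on $[0,t]$ ending at $x$ and an $\varepsilon$-optimal sub-trajectory on $[t,T]$ starting at $x$, and glue them at time $t$ into a pair $(x(\cdot),u(\cdot))$ on $[0,T]$. One checks this pair is admissible for~\eqref{problem}: the state path is absolutely continuous because the two pieces agree (equal to $x$) at $t$, the glued control is measurable with values in $U$, the dynamics hold almost everywhere, and $x(s)\in X$ throughout. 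Evaluating its cost and using the $\varepsilon$-optimality of the two pieces gives $v^*\ge v\fromsource(x,t)+v\todest(x,t)-2\varepsilon$; letting $\varepsilon\to 0$ and taking the supremum over $x$ finishes~\eqref{char_opt_value}.

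It then remains to handle attainment. If $x\in\Gamma^*_t$, write $x=x^*(t)$ for an optimal pair and split its cost at $t$ exactly as in the first step: this gives $v^*\le v\fromsource(x,t)+v\todest(x,t)$, while~\eqref{char_opt_value} gives the reverse inequality, so the supremum is attained at $x$ (and, incidentally, the two pieces of the optimal cost must individually equal $v\fromsource(x,t)$ and $v\todest(x,t)$). Since $\Gamma^*_t\neq\emptyset$ by assumption, the same $x$ serves as a maximizer, which proves the first ``moreover''; the boundary cases $t=0$ and $t=T$, where one of $v\fromsource,v\todest$ degenerates to $\phi_0$ or $\phi_T$, need no separate treatment. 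I expect the only points requiring real care to be: (i) the suprema defining $v\fromsource$ and $v\todest$ need not be attained, so one must work with $\varepsilon$-optimal sub-trajectories rather than optimal ones; and (ii) verifying that the concatenation of two admissible sub-trajectories is again an admissible trajectory for~\eqref{problem} (absolute continuity across $t$, measurability of the glued control, and preservation of the state and control constraints). Neither is difficult, but both should be stated explicitly.
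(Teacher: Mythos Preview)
Your proof is correct and follows the same splitting-and-concatenation idea that the paper implicitly invokes when it says the equality ``follows in a straightforward way from the definition of the value functions''; you have simply written out in full the two inequalities (split an admissible trajectory at $t$; concatenate $\varepsilon$-optimal sub-trajectories) and the attainment argument that the paper leaves to the reader. The only minor remark is that, under the standing boundedness assumptions on $\ell,\phi_0,\phi_T$ and $X$, the values $v\fromsource(x,t)$ and $v\todest(x,t)$ are finite for every $x\in X$, so the case distinction on finiteness is not actually needed.
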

	\begin{proof}
          The equality \eqref{char_opt_value} follows in a straightforward way from the definition of the value functions $v\todest, v\fromsource$ in \eqref{value_todest} and \eqref{value_fromsource}.
          Moreover, since there exists an optimal trajectory $x^*(\cdot)$, then the supremum in~\eqref{char_opt_value} is achieved at $x^*(t)\in \Gamma^*_t$, for all $t\in [0,T]$. Conversely, for all $x\in \Gamma^*_t$, there exists an optimal trajectory $x^*$ such that $x^*(t) = x$, and the supremum in~\eqref{char_opt_value} is achieved at $x=x^*(t)$.
		\end{proof}
For all $t\in [0,T]$, let us define the map $\F_v^t: X \mapsto \R$ by 
\begin{equation}\label{map_value}
	\F_v^t(x)=\F_v(x,t) = v\fromsource(x,t) + v\todest(x,t) \ .
\end{equation}
Consider for every $t\in[0,T]$ the subdomain $\Od_\eta^t \subset X$,
depending on a parameter $\eta>0$, and defined as follows:
\begin{equation}\label{subdomain}
\Od^t_\eta = \{ x\in X \mid \F_v^t(x) > \sup_{x\in X} \F_v^t(x) - \eta  \}	 \ .
\end{equation}
In fact, $\Od^t_\eta$ can be thought of as a $\eta-$neighborhood around the geodesic points at time $t$, $\Gamma^*_t$. We set $\Od_\eta := \{ (x,t) \mid x \in \Od^t_\eta \}$.
We intend to reduce the (state,time)-space $X\times [0,T]$ of our optimal control problem
to such an $\eta-$neighborhood. I.e., we replace the constraint~\eqref{dynamic&constraint}
by
\begin{equation}\label{constraint_reduced}
\left\{
\begin{aligned}
	& \dot{x}(s) = f(x(s),u(s)) \ , \\
	& x(s) \in \Od^s_\eta, \ u(s) \in U \ , 
\end{aligned}	
\right.
\end{equation}
for all $s\in[0,T]$. Let us denote $v^*_\eta$ the maximum of \eqref{problem} under the new constraint \eqref{constraint_reduced}. Then we have
\begin{proposition}\label{pro_value}
    $v^*_\eta = v^* \ .$	
\end{proposition}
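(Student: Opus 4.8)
The plan is to prove the two inequalities $v^*_\eta \le v^*$ and $v^*_\eta \ge v^*$ separately. The first is immediate: the reduced problem \eqref{problem}--\eqref{constraint_reduced} is obtained from the original one \eqref{problem}--\eqref{dynamic&constraint} simply by adding the extra state constraint $x(s)\in\Od^s_\eta$, so every trajectory admissible for the reduced problem is admissible for the original one. Since the objective functional is unchanged, taking the supremum over a smaller admissible set cannot increase the value, hence $v^*_\eta \le v^*$.

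For the reverse inequality, the key point I would establish is that \emph{at least one} optimal trajectory of the original problem is already admissible for the reduced problem, for every $\eta>0$. Indeed, by Definition~\ref{def_opttra} there exists an optimal trajectory $(x^*(\cdot),u^*(\cdot))$. For each $t\in[0,T]$ we have $x^*(t)\in\Gamma^*_t$, so by the converse statement of Proposition~\ref{pro_opt} the supremum in \eqref{char_opt_value} is attained at $x^*(t)$, i.e. $\F_v^t(x^*(t)) = v^* = \sup_{x\in X}\F_v^t(x)$. In particular $\F_v^t(x^*(t)) > \sup_{x\in X}\F_v^t(x) - \eta$ for every $\eta>0$, which, by the definition \eqref{subdomain} of $\Od^t_\eta$, means $x^*(t)\in\Od^t_\eta$ for all $t$. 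Thus $(x^*(\cdot),u^*(\cdot))$ satisfies \eqref{constraint_reduced}, and evaluating the objective along it gives $v^*_\eta \ge \int_0^T \ell(x^*(s),u^*(s))\,ds + \phi_0(x^*(0)) + \phi_T(x^*(T)) = v^*$. Combining the two inequalities yields $v^*_\eta = v^*$.

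The only genuinely non-trivial ingredient is the passage from the pointwise statement ``$\sup_{x\in X}\F_v^t(x) = v^*$'' to the trajectorial statement ``a whole admissible curve stays in the tube $\Od_\eta$''; this is exactly what the converse part of Proposition~\ref{pro_opt} provides. Without it, one would only know that each time-slice $\Od^t_\eta$ is nonempty, not that these slices are threaded by a single coherent trajectory of the controlled dynamics. I do not anticipate any further obstacle, since no measurability, compactness, or regularity argument beyond the already-assumed non-emptiness of the set of optimal trajectories is needed, and the attainment of $v^*$ itself is guaranteed by that same assumption.
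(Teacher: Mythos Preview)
Your proof is correct and follows essentially the same approach as the paper: both argue $v^*_\eta\le v^*$ from the inclusion $\Od^s_\eta\subset X$, and both obtain $v^*_\eta\ge v^*$ by invoking Proposition~\ref{pro_opt} to show that an optimal trajectory $x^*(\cdot)$ satisfies $x^*(s)\in\Od^s_\eta$ for all $s$, hence is admissible for the reduced problem. Your version is slightly more explicit in unpacking why $x^*(t)\in\Od^t_\eta$ via the identity $\F_v^t(x^*(t))=\sup_{x\in X}\F_v^t(x)$, but the logic is identical.
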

\begin{proof}
	The inequality $v^* \geq v^*_\eta $ is  straightforward since $\Od_\eta^s \subset X$ for all $s\in[0,T]$. To show the reverse inequality, let us take an optimal trajectory $x^*(\cdot)$ for the original problem. Then, by the result of Proposition \ref{pro_opt}, we have $x^*(s)\in \Od^s_\eta, \forall s\in[0,T]$. Thus $v^*_\eta \geq v^*$ since $v^*$ is exactly the value of the integral in \eqref{problem} following the optimal trajectory $x^*(\cdot)$ .
\end{proof}
Proposition \ref{pro_value} indeed tells us that, to solve the problem \eqref{problem}, only %
   the $\eta-$neighborhood, $\Od_\eta$, around the optimal trajectory is relevant. In the following, we will focus on solving the problem $\eqref{problem}$ using an approximation of such a neighborhood.

\section{Propagation by Lax-Oleinik Semi-Groups and Max-Plus Approximation}
We denote by $S^\tau\todest$ the {\em Lax Oleinik semigroup} of \eqref{HJB_todest} with $\tau = T-t$,
i.e., the evolution semigroup of this PDE,
meaning that, for all $0\leq t\leq T$, $S\todest^\tau$ is the map sending the final cost function $\phi_T(\cdot)$ to the value function $v\todest(\cdot,\tau)$, so that the semi-group property $S^{\tau_1+\tau_2}=S^{\tau_1}\circ S^{
	\tau_2}$ is satisfied. In addition, the map $S^\tau\todest$ is
{\em max-plus linear}, meaning that for all $\lambda \in \R$ and for all functions
$\phi_T^1$ and $\phi_T^2:X\to\R$, we have:
\begin{equation}\label{mplinear}
\begin{aligned}
	&S^\tau\todest[\sup(\phi_T^1 , \phi_T^2)] = \sup(S^\tau\todest[\phi_T^1] , S^\tau\todest[\phi_T^2]) \ ,\\
	&S^\tau\todest[\lambda + \phi^1_T] = \lambda + S^\tau\todest[\phi_T^1] \ ,
\end{aligned}	
\end{equation}
where for any function $\phi$ on $X$, $\lambda+\phi$ is the function
$x\in X\mapsto \lambda+\phi(x)$
(see for instance \cite{fleming2000max}, \cite{akian2008max}, \cite{dower}).
Indeed, the property~\eqref{mplinear}
can be interpreted as the linearity in the sense of the max-plus semifield, which is the set
$\Rm: = \R\cup\{-\infty\}$  equipped with the addition $a \oplus b: = \max(a,b)$ and the multiplication $a \odot b:= a+b$, with $-\infty$ as the zero and $0$ as the unit.
Notice that the above properties hold, mutatis mutandis, for the evolution
operator of the dual equation~\eqref{HJB_fromsource}.
We will then briefly describe the approximation method based on the max-plus linearity  introduced in \cite{akian2008max}, which may be thought of as a max-plus analogue of the finite element methods.

Let us discretize the time horizon by $N = \frac{T}{\delta}$ steps. Denote $v^t\todest = v\todest(\cdot,t)$. By the semigroup property we have:
\begin{equation}\label{propagation}
v^{t-\delta}\todest = S^\delta \todest[v^t\todest], \ \forall \ t =  \delta, 2\delta, \dots , T \ , \quad v\todest^T=\phi_T\enspace .	
\end{equation}
Denote $\Rmb:=\Rm\cup\{+\infty\}$ the complete semiring extending $\Rm$, and let $\W$ be a complete $\Rm$-semimodule of functions $w: X \to \Rmb$, meaning that $\W$ is stable under taking the supremum of an arbitrary family of functions,
and by the addition of a constant, see~\cite{Mc2006,cgq02} for background.
We choose this semimodule $\W$ in such a way that $v^t\todest \in \W$ for all $t \geq 0$. In many applications, the value function $v^t$ is known to be
$c$-semiconcave for all $t\in [0,T]$,
and then $\W$ can be taken to  be the set of $c$-semiconcave functions,
which is a complete module, see~\cite{Mc2006,akian2008max}.
We also choose $\Z$, a complete $\Rm$-semimodule of test functions $z: X \mapsto \Rmb$. %
If the space of test functions $\Z$ is large enough,  \eqref{propagation} is equivalent to:
\begin{equation}\label{var_form}
	\< z , v^{t-\delta}\todest > = \< z , S^\delta\todest [v^t\todest] >  \;\forall t,\; \< z , v^T >= 	\< z , \phi_T > \; \forall z \in \Z ,
\end{equation}
where the max-plus scalar product of $u\in \W$ and $v\in \Z$
is defined by %
$\< u , v > = \sup_{x \in X} (u(x)+v(x)) \in \Rmb$.

Note that in the system~\eqref{var_form},  the unknown value
functions are elements of $\W$, therefore having an infinite number
of degrees of freedom, and that there are infinitely many equations
(one for each element $z\in\Z$).
Hence, we need to discretize this system.
To do so, we consider $\W^h \subset \W$, a semimodule generated by a finite family of basis functions $\{w_i\}_{1\leq i \leq p}$. %
The value function $v^t\todest$ at time $t$ is approximated by $v^{t,h} \todest\in \W^h$, that is:
\begin{equation}\label{mpapproxi}
v^{t,h}\todest %
:= \sup_{1\leq i \leq p} \{\lambda^t_i + w_i \} \ :\ 
x\mapsto \max_{1\leq i \leq p} \{\lambda^t_i + w_i(x) \} \ ,
\end{equation}
where $\{\lambda^t_i\}_{1 \leq i \leq p}$ is a family of scalars.
We then consider $\Z^h \subset \Z$, a semimodule generated by a finite family of test functions $\{ z_j \}_{1 \leq j \leq q}$, and, instead of requiring~\eqref{var_form} to hold for all $z\in \Z$, we only require that it holds for
generators, leading to a finite system of equations.
Therefore, the approximation $v^{t-\delta,h}\todest$ and $v^T$ should satisfy: %
 \begin{equation}\label{approt+delta}
     \< z_j , v^{t-\delta,h}\todest > = \< z_j , S^\delta\todest [v^{t,h}\todest] > , \ \< z_j , v^T >= \< z_j , \phi_T >  \ \forall j\ .
 \end{equation}
 It is a key property of max-plus algebra that a system of linear equations, even when the number of equations coincides with the number of degrees of freedom, and when the system is ``nonsingular'', may have no solution, so that the
 notion of solution must be replaced by a notion of maximal subsolution,
 which is always well posed.
 In particular, \eqref{approt+delta} may not have a solution. Hence, 
 we define $v^{t-\delta,h}\todest$ to be the maximal
 solution of the following system of inequalities:
 \begin{equation}\label{maxsolution}
	\< z_j , v^{t-\delta,h}\todest > \leq \< z_j , S^\delta\todest [v^{t,h}\todest] > , \ \< z_j , v^T >\leq  \< z_j , \phi_T >, \ \forall j \ .
\end{equation}
Let us denote $W_h: \Rm^p \mapsto \W$ the max-plus linear operator such that 
$W_h (\lambda) = \oplus_{1 \leq i\leq p} \{\lambda_i \odot w_i \}$,
and $Z^*_h :\W  \mapsto\Rm^q $ with $(Z^*_h(w))_j = \< z_j , w >, \forall 1 \leq j \leq q$.  %
Recall that, 
for every ordered sets $\Ss,\T$ and order preserving map $g: \Ss \mapsto \T$, the residuated map $g^{\#}$ is defined as $g^{\#}(t) = \max \{ s\in \Ss \mid g(s)\leq t \}$, when it exists. Max-plus linear operators have a residuated map. Moreover,
by \cite[Th.~1]{cohen1996kernels}, 
for all max-plus linear operators 
$B: \U \mapsto \X$, $C : \X \mapsto \Y$ over complete semimodules $\X,\Y, \U$, the operator $\Pi_B^C := B \circ (C \circ B)^{\#} \circ C  $ is a projector, and we have, %
for all $x \in \X$:
	\begin{equation}
	\Pi_B^C (x)= \max \{ y \in \mathrm{im} B \mid Cy \leq Cx \} \ .
	\end{equation}
Then, the approximations $v^{t,h}\todest$ can be expressed as follows.
\begin{proposition}[\cite{akian2008max}]\label{recursion_lambda}
Consider the maximal  $\lambda^t \in \Rm^p$ and $v^{t,h}\todest\in \W_h$,
 $t = 0, \delta$, $\dots , T$, 
such that 
$v^{t,h}\todest = W_h \lambda ^t$,
with $v^{t-\delta, h}\todest$, $t\geq \delta$, 
and $v^T$ solutions of \eqref{maxsolution}.
We have, 
\[ v^{t-\delta,h}\todest = S^{\delta,h}\todest[v^{t,h}\todest],
\; \text{where}\;     S^{\delta,h}\todest = \Pi^{Z^*_h}_{W_h} \circ S^\delta\todest \ ,\]
and %
\[		\left\{
			\begin{aligned}
				&\lambda^{t-\delta} = (Z^*_h W_h)^{\#} (Z^*_h S^\delta\todest W_h \lambda^t), \ \forall t = \delta, 2 \delta ,\dots, T \ , \\
				&\lambda^T = W_h^\#  \phi_T \ . 
			\end{aligned}
			\right.\]
\end{proposition}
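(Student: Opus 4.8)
The plan is to unwind the backward recursion one time step at a time and to recognize each step of~\eqref{maxsolution} as an instance of the max-plus projection formula recalled just before the statement. Fix $t\in\{\delta,2\delta,\dots,T\}$ and regard $v^{t,h}\todest=W_h\lambda^t$ as given; set $w:=S^\delta\todest[v^{t,h}\todest]$, which lies in $\W$ since $S^\delta\todest$ is max-plus linear and maps $\W$ into itself. Writing the $q$ scalar constraints of~\eqref{maxsolution} at step $t-\delta$ through the operator $Z^*_h$, the unknown $v^{t-\delta,h}\todest$ is required to lie in $\mathrm{im} W_h=\W_h$ and to satisfy $Z^*_h v^{t-\delta,h}\todest\le Z^*_h w$. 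By the characterization of the projector $\Pi_B^C$ applied with $B=W_h$ and $C=Z^*_h$, the maximal such element is exactly $\max\{y\in\mathrm{im} W_h\mid Z^*_h y\le Z^*_h w\}=\Pi^{Z^*_h}_{W_h}(w)$, so that $v^{t-\delta,h}\todest=\Pi^{Z^*_h}_{W_h}\big(S^\delta\todest[v^{t,h}\todest]\big)=S^{\delta,h}\todest[v^{t,h}\todest]$ with $S^{\delta,h}\todest=\Pi^{Z^*_h}_{W_h}\circ S^\delta\todest$. This is the first displayed identity; the only fact to record beforehand is that $W_h$, $Z^*_h$, $Z^*_h W_h$ and $S^\delta\todest$ are max-plus linear between complete semimodules, hence residuated, so that $\Pi^{Z^*_h}_{W_h}=W_h\circ(Z^*_h W_h)^\#\circ Z^*_h$ is well defined.

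For the recursion on coefficients I would expand $\Pi^{Z^*_h}_{W_h}$ and substitute $v^{t,h}\todest=W_h\lambda^t$, getting $v^{t-\delta,h}\todest=W_h\big((Z^*_h W_h)^\#(Z^*_h S^\delta\todest W_h\lambda^t)\big)$, so $\lambda^{t-\delta}:=(Z^*_h W_h)^\#(Z^*_h S^\delta\todest W_h\lambda^t)$ is a representing vector. To see it is the \emph{maximal} one, note that by definition of the residuated map $(Z^*_h W_h)^\#(b)=\max\{\lambda\mid Z^*_h W_h\lambda\le b\}$ with $b:=Z^*_h S^\delta\todest W_h\lambda^t$, and that, by the standard inequality $g(g^\#(b))\le b$ together with the monotonicity of $W_h$, applying $W_h$ to this maximal $\lambda$ recovers the maximal feasible element of $\mathrm{im} W_h$ identified above, so the two descriptions agree. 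For the terminal time $t=T$, the maximal $\lambda^T$ with $W_h\lambda^T\le\phi_T$ is $W_h^\#\phi_T$ by residuation, and $v^{T,h}\todest=W_hW_h^\#\phi_T\le\phi_T$ is then compatible with the terminal constraint in~\eqref{maxsolution}.

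It remains to assemble these per-step facts into the backward induction of the statement. The system~\eqref{maxsolution} is causal: the inequality constraining $v^{t-\delta,h}\todest$ couples it only with $v^{t,h}\todest$, through the monotone map $v^{t,h}\todest\mapsto Z^*_h S^\delta\todest v^{t,h}\todest$, so enlarging $v^{t,h}\todest$ can only relax the constraint on $v^{t-\delta,h}\todest$. Hence the componentwise-largest solution is obtained greedily, from $t=T$ downwards: take $\lambda^T$ maximal, then at each step take $\lambda^{t-\delta}$ maximal given $\lambda^t$. Such a largest solution exists and is unique because the solution set of~\eqref{maxsolution} is stable under arbitrary suprema --- $Z^*_h$ and $S^\delta\todest$ being max-plus linear, hence sup-preserving. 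Substituting the per-step formulas then gives precisely the stated recursion for $(\lambda^t)_{t}$ and the identity $v^{t-\delta,h}\todest=S^{\delta,h}\todest[v^{t,h}\todest]$.

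The computation is short once the projector identity of~\cite{cohen1996kernels} and the max-plus linearity of the Lax--Oleinik semigroup are in hand; I do not expect a genuine obstacle. The one point requiring care is the interplay between the two levels of unknowns --- the functions $v^{t,h}\todest\in\W_h$ and the coefficient vectors $\lambda^t\in\Rm^p$ --- since $W_h$ need not be injective and a given $v^{t,h}\todest$ may have several representations: one must check that passing to the maximal $\lambda$ is compatible with passing to the maximal $v$, which is where the residuation inequality $g(g^\#(b))\le b$ enters.
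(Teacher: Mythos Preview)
The paper does not supply its own proof of this proposition: it is quoted from \cite{akian2008max} and immediately followed by the matrix reformulation~\eqref{mass_stiffness}. Your argument is the standard derivation one expects for such a result---identify the per-step constraint as an instance of the projection formula $\Pi_B^C(x)=\max\{y\in\operatorname{im}B\mid Cy\le Cx\}$ with $B=W_h$, $C=Z^*_h$, then expand $\Pi^{Z^*_h}_{W_h}=W_h\circ(Z^*_hW_h)^\#\circ Z^*_h$ and read off the recursion on~$\lambda^t$; the backward causality/monotonicity argument justifying the greedy maximization is exactly right.

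One small point deserves a cleaner justification. For the terminal step you write that the maximal $\lambda^T$ with $W_h\lambda^T\le\phi_T$ is $W_h^\#\phi_T$, and then observe this is ``compatible'' with the terminal inequality in~\eqref{maxsolution}. But taken literally, the terminal constraint in~\eqref{maxsolution} is only $Z^*_hW_h\lambda^T\le Z^*_h\phi_T$, whose maximal solution is $(Z^*_hW_h)^\#Z^*_h\phi_T$, in general larger than $W_h^\#\phi_T$. The formula $\lambda^T=W_h^\#\phi_T$ in the proposition (and in Algorithm~\ref{fmalgo}) corresponds to approximating $\phi_T$ from below in $\W_h$ directly, i.e.\ to the constraint $v^{T,h}\todest\le\phi_T$ rather than its $Z^*_h$-relaxation. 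You should either state explicitly that this stronger terminal constraint is being imposed, or note that the proposition is reproducing the convention of~\cite{akian2008max} at $t=T$. Apart from this, the proof is complete.
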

The above formula can expressed using the linear operators $M_h:=Z^*_h W_h$ and $K_h:=Z^*_h S^\delta\todest W_h $, with entries:
\begin{equation}\label{mass_stiffness}
	\begin{aligned}
	&(M_h)_{j,i} = \< z_j , w_i > \ , \quad %
(K_h)_{j,i} = \< z_j , S^\delta\todest w_i > \ .	
	\end{aligned}
\end{equation}
The matrices $M_h$ and $K_h$ may be thought of as max-plus analogues
of the {\em mass} and {\em stiffness} matrices arising in the finite
element method, see~\cite{akian2008max}.
Computing $(M_h)_{j,i}$ is a convex programming problem, which can be solved by standard optimization methods (sometimes the solution can even be computed analytically). Computing $(K_h)_{j,i}$ is equivalent to solve the associated control problem in a small time horizon $\delta$. An approximation method proposed in \cite{akian2008max} is to use the Hamiltonian of the problem. Alternatively, a direct
method can be used, see e.g.~\cite{BocopExamples} for background on direct methods in optimal control. After $M_h$, $K_h$ are computed (or approximated), the max-plus method works as follows: 
\begin{algorithm}[htbp]
	\caption{Max-Plus Approximation Method} 
	\label{fmalgo}
	\begin{algorithmic}[1]
		\State Discretize time horizon by $N = \frac{T}{\delta}$ steps.
		\State Choose basis functions $\{w_i\}_{1\leq i \leq p}$ and $\{z_j\}_{1\leq j \leq q}$.  
		\State Compute (or approximate) $M_h$ and $K_h$.
		\State Initialize $\lambda^T = W_h^\# \phi_T$, $v^{T,h}\todest = W_h \lambda^T$.
		\For{t = T, T-$\delta$,\dots,$\delta$ }
		\State $\lambda^{t-\delta} = M_h^\#K_h \lambda^t$.
		\State $v^{t-\delta,h}\todest = W_h \lambda^{t-\delta}$
		\EndFor  
	\end{algorithmic}
\end{algorithm}
 
 \section{Adaptive Max-Plus Approximation Method}
 In Section \ref{sec_subdomain}, we observed that to solve the optimal control  problem \eqref{problem}, we need only to focus on a neighborhood of the optimal trajectory--if we could approximately know it in advance. This also works when we intend to find an approximation of the value function. In this section, we will propose an adaptive max-plus approximation method to solve problem \eqref{problem}. The general idea is to start with a small set of basis functions, then adaptively add more basis functions to better approximate the two value functions $v_{\fromsource}$ and $v_{\todest}$ in a suitable neighborhood of the optimal trajectories, obtained from the approximate value functions.

We discretize %
the time horizon by $N = \frac{T}{\delta}$ steps. %
    Then, our algorithm consists of three main steps:
    
     \textit{Step 1. Coarse Approximation.} Let us start with a regular grid $X^H = \{ x_1,x_2,\dots, x_{p^H}\}$, with step size $H$, and let $I^H:=\{1,\ldots, p^H\}$ be the index set of $X^H$. Natural choices of basis functions and test functions, are the Lipschitz functions of the form $w_{x_i}(x) := - c\| x- x_i \|_1 $, and the quadratic functions of the form $w_{x_i}(x) := -c \| x - x_i \|^2_2 \ $, for every $x_i \in X^H$. %
         We fix some sets of basis functions $\{w_{x_i} \}_{x_i \in X^H}$ and test functions $\{z_{x_i}\}_{x_i \in X^H}$, and apply Algorithm \ref{fmalgo}. This leads to an approximation of the two value functions $v\fromsource$ and $v\todest$, by the maps $v^{t,H}\fromsource$ and $v^{t,H}\todest$, with  $t \in \{ 0, \delta,\dots,T\}$, respectively.
     
     \textit{Step 2. Optimal Trajectory Approximation.} 
For every $t \in \{ 0, \delta,\dots,T\}$ 
and every $x\in X$, we have:
     \begin{equation}\label{approx_att}
     	\begin{aligned}
     		& v^{t,H}\fromsource(x) = \max_{ 1 \leq i \leq p^H }  \{ \lambda_i^{\fromsource, t} + w_{x_i} (x) \} \ , \\
     		& v^{t,H}\todest(x) = \max_{1 \leq i \leq p^H } \{ \lambda^{\todest, t}_i + w_{x_i}(x)  \} \ . 
     	\end{aligned}
     \end{equation}
     Then, an approximation for $\F_v^t$ is given by:
     \begin{equation}\label{approx_fvt}
\F^t_{v^H} = \sup_{1 \leq i,i' \leq p^H } \{ \lambda^{\fromsource,t}_i+\lambda^{\todest,t}_{i'} + w_{x_i} + w_{x_{i'} } \} \ .
     \end{equation}
     For a given $\eta^H$, let us denote by $\Od_{\eta^H,H}^{t} \subseteq X$ the approximation of $\Od^t_{\eta^H}$ defined as follows:
     \begin{equation}\label{active}
     \Od^{t}_{\eta^H,H} = \{ x\in X \mid \F_{v^H}^t(x) > \max_{y \in X} \{ \F_{v^H}^t(y) - \eta^H \} \ \} \ .	
     \end{equation}
Denote $\Mw_{i,i'}= w_{x_i} + w_{x_{i'}}$ for all $i,i'\in I^H$.  
Using \eqref{approx_fvt}, the r.h.s.\ in \eqref{active} can be computed 
as a function of 
the scalars $\Mw^*_{i,i'} := \max_{y \in X} \Mw_{i,i'}(y)=\<w_{x_i}, w_{x_{i'}}>$:
      \begin{equation}\label{active_ii}
      	   \max_{y \in X} \F^t_{v^H}(y) = \max_{i,i'\in I^H} \{ \lambda^{\fromsource,t}_i + \lambda^{\todest,t}_{i'} + \Mw^*_{i,i'} \} \ .
      \end{equation}
Moreover, for the above basis functions $w_{x_i}$ and $w_{x_{i'}}$, %
the scalars $\Mw^*_{i,i'} $ can be computed analytically.
      Denote $\Nw_{v^H}^t(i,i') = \lambda^{\fromsource,t}_i + \lambda^{\todest,t}_{i'} + \Mw^*_{i,i'}$ and let $\Nw_{v^H}^{t,*} = \max_{i,i' \in I^H} \Nw_{v^H}^t(i,i')$. In this step, we first select the couples $(i,i')$ as follows:
      \begin{equation}\label{opt_ii}
      	\Id^t_{\eta^H,H} := \{ (i,i')\in (I^H)^2\mid \Nw_{v^H}^t(i,i') > \Nw_{v^H}^{t,*} - \eta^H \} \ .
      \end{equation}
Then, based on $\Id^t_{\eta^H,H}$, we select $\Ad^t_{\eta^H,H} \subset X$ as follows:
      	\begin{equation}\label{active_t}
      		\Ad^t_{\eta^H,H} = \{x\in X \mid \exists (i,i') \in \Id^t_{\eta^H,H} \; \Mw_{i,i'}(x) > \Mw^*_{i,i'} - \eta^H  \}\enspace .
      	\end{equation}
\todo[inline]{MA: I have put $\exists$ since 
it is impossible to show that $\Od^t_{\eta^H,H}\subset\Ad^t_{\eta^H,H}$ if we put
$\forall$. This is not logical. One would need some regularity of $\Nw_{v^H}^t(i,i') $ with respect to $i,i'$   for this to work, and this is a huge work to show that. Maybe your numerical results work because of such a regularity. In the general case one need to define $\Ad^t_{\eta^H,H}$ with ``there exists''.  }
The set $\Ad^t_{\eta^H,H}$ can be compared with $\Od^{t}_{\eta^H,H}$, and
seen as an approximation of $\Od^t_{\eta^H}$.
      At the end of this step, we obtain an \textit{active region}:
     \begin{equation}\label{activeregion}
     X_f =%
     \mathop{\cup}_{t\in\{0,\delta,\dots,T\}} \{\Ad^t_{\eta^H,H} \} \ .
     \end{equation} 
           
      \textit{Step 3. Fine Approximation.} In this step, we consider the discretization of  $X$ by a regular grid $X^h$ with step size $h <H$,
and set $X^h_f := X^h\cap X_f=\{x_1, x_2, \dots, x_{p^h} \}$.
For the purpose of efficiency, we shall directly compute
      	\begin{equation}\label{active_th}
      		\Ad^{t,h}_{\eta^H,H} =  X^h\cap \Ad^{t}_{\eta^H,H}\enspace,
      	\end{equation}
then $X_f^h = \mathop{\cup}_{t\in\{0,\delta,\dots,T\}} \{\Ad^{t,h}_{\eta^H,H} \}$.
Given $X^h_f$, we add more basis functions and test functions by using the points in $X^h_f$. We then use the new set of basis functions: $\{w_{x_i} \}_{x_i \in (X^H \cup X^h_f)}$ and the new set of test functions: $\{z_{x_i}\}_{x_i \in (X^H \cup X^h_f)}$ to approximate the two value functions at each time step $t$.
      
The above approximation steps can be easily repeated, for instance, $m$ times. Moreover, the discretization grids need not be regular, and in fact, the general error estimate established in \cite{akian2008max} applies to an irregular grid.
The error is expressed in terms of an abstract mesh parameter, defined as the maximal diameter of a cell of the Voronoi tesselation induced by the grid points of the \textit{active region}. 

To define the repeated steps, we need a family of parameters $\{ \eta_l \}_{l =1,2,\dots,m}$ selecting the \textit{active regions} based on the previous two directions' approximations. We also need a family of mesh steps $H_1 > H_2 > \cdots >H_{m+1}$ and the corresponding discretization grids $X^{H_1},\ldots , X^{H_{m+1}}$ of $X$, for constructing the space discretization of the \textit{active regions}. 

We assume these parameters are fixed in advance.
      Then, we get the following algorithm:
      \begin{algorithm}[htbp]
	\caption{Adaptive Max-Plus Approximation Method} 
	\label{afmalgo}
	\begin{algorithmic}[2]
		\State Discretize time horizon by $N = \frac{T}{\delta}$ steps.
		\State Set \Ba \ and \Te \ to empty sets, set $X_f^{H_1}$ to $X^{H_1}$;
		\For {$l = 1$ to $m+1$}
		\State \Ba \  = \Ba \ $\cup \ \{w_{x_i}\}_{x_i\in X_f^{l}}$;
		\State \Te \ = \Te \ $\cup \ \{z_{x_i}\}_{x_i\in X_f^{l}}$;
		\State Approximate $v\fromsource $, $v\todest$ using Algorithm \ref{fmalgo} with \Ba \ and \Te;
                \If{$l\leq m$}
		\State Set $H=H_l$, $\eta^H=\eta_l$;
                \State Set $I^H$ as an index set for \Ba;
                \State Set $h=H_{l+1}$, and $X_f^{H_{l+1}}$ to empty set;
		\For{$t=0,\delta,\dots,T$}
		\State Compute $\Mw_{i,i'}^*$ for all $i,i'\in I^H$;
		\State Compute $\Id^t_{\eta^H,H}$ as in \eqref{opt_ii};
		\State Compute $\Ad_{\eta^H,H}^{t,h}$ by {\rm (\ref{active_t},\ref{active_th})};
		\State $X_f^{H_{l+1}} = X_f^{H_{l+1}} \cup \Ad_{\eta_H,H}^{t,h}$ ;
		\EndFor
                \EndIf
		\EndFor   
	\end{algorithmic}
\end{algorithm}
\todo[inline]{MA: in the algo, one cannot compute $\Ad^t_{\eta_l}$ as in \eqref{active_t} since this is an infinite set, one should select directly the discretization points, then the first step  ``Discretize $X_f$ by $X_f^{H_l}$ with step size $H_l$'' was useless, so either one begin by first and continue or do the two levels and at the end do not compute the active set, this is what I have done above. Also the best is to cmpute directly the fine points so I replaced $\Ad^t_{\eta_H}$ by $\Ad_{\eta_H}^{t,h}$ as in \eqref{active_th}}

We count, in Algorithm \ref{afmalgo}, each time's computation of one \textit{level} $l$, that is the first two main steps above  when $H=H_l$ and the discretization part of the last one when $h=H_{l+1}$. \todo[inline]{MA: I changed the words, but the above paragraph has to be put after no ? SG: I dont understand the English of this sentence....}

For each level $l\in\{1,2,\dots,m+1\}$ of Algorithm~\ref{afmalgo}, let  $v^{t,H_l}\fromsource, v^{t,H_l}\todest$ with $t\in \{0,\delta,\ldots, T\}$, be the approximations of $v^t\fromsource$ and $v^t\todest$ computed using the
discrete \textit{active region} $X_f^{H_l}$, and for $l\leq m$, let us denote by $X^{(l+1)}_f$ the union of the \textit{active regions} $\Ad^{t}_{\eta_l,H_l}$ at time $t$, selected by  {\rm (\ref{opt_ii},\ref{active_t})}, using $v^{H_l}\fromsource, v^{H_l}\todest$ and $\eta_{l}$. We also set $X^{(1)}_f=X$.
 Then, for all $l$, the set $X_f^{H_l}$ is the discretization of $X_f^{(l)}$.
For all $l\in\{1,2,\dots,m+1\}$ and $t\in \{0,\delta,\ldots, T\}$,  let us denote by $\tilde{v}^{t,H_l}\fromsource$ and $\tilde{v}^{t,H_l}\todest$ the approximations of $v^t\fromsource$ and $v^t\todest$ using Algorithm \ref{fmalgo} with the sets of basis functions and test functions obtained from the discretization grids $X^{H_l}$ of $X$ with mesh step $H_l$. Due to the initialization, the functions $\tilde{v}^{t,H_l}\fromsource,\tilde{v}^{t,H_l}\todest$ coincide with $v^{t,H_l}\fromsource, v^{t,H_l}\todest$ for $l=1$.
We have the following result:

\begin{theorem}\label{conver}$ $
	\begin{itemize}
		\item[(i)] For every $l \in \{1,2,\dots,m \}$, there exists an $\bar{\eta}_l$ depending on $H_l$ and $\delta$ such that for all $\eta_l \geq \bar{\eta}_l$, and $t\in \{0,\delta,\ldots, T\}$, 
$X^{l+1}_f$ contains $\Gamma^*_t$, that is the set of geodesic points for problem \eqref{problem} at time $t$.
		\item[(ii)] Take $\eta_l$ as proposed in {\rm (i)}, then for every $l\in\{2,\dots,m+1\}$, $t\in \{0,\delta,\ldots, T\}$ and $x\in \Gamma^*_t$,  we have $v^{t,H_l}\fromsource(x) = \tilde{v}^{t,H_l}\fromsource(x), v^{t,H_l}\todest(x) = \tilde{v}^{t,H_l}\todest$(x). Thus, $\{v^{t,H_m}\fromsource\}$, $\{v^{t,H_m}\todest\}$ converge to $v^t\fromsource, v^t\todest$ respectively as $H_m \to 0$.  
	\end{itemize}
\end{theorem}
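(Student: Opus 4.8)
The plan is to establish (i) and (ii) by a joint induction on the level $l$, running in the order $(\mathrm{i})_1\Rightarrow(\mathrm{ii})_2\Rightarrow(\mathrm{i})_2\Rightarrow\cdots\Rightarrow(\mathrm{ii})_{m+1}$, and then to read off the convergence statement. The base case $(\mathrm{i})_1$ is special because level $1$ uses the uniform grid $X^{H_1}$ over all of $X$ (since $X^{(1)}_f=X$), so the error estimate of \cite{akian2008max} applies directly: there is $\varepsilon_1=\varepsilon(H_1,\delta)$ with $\|v^{t,H_1}\fromsource-v^t\fromsource\|_\infty\le\varepsilon_1$ and $\|v^{t,H_1}\todest-v^t\todest\|_\infty\le\varepsilon_1$ for all $t\in\{0,\delta,\dots,T\}$. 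In the inductive step, $(\mathrm{ii})_l$ transfers this same bound from the uniform-grid approximation $\tilde v^{t,H_l}$ to the active-region approximation $v^{t,H_l}$ at geodesic points (and, with a little care, on a neighborhood of them), and then $(\mathrm{i})_l$ is proved exactly as the base case with $\varepsilon_l$ in place of $\varepsilon_1$. Thus the only two genuinely new ingredients are: for (i), a deterministic set-inclusion argument given the error bound; for (ii), a locality property of the max-plus propagation.

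For (i): assume $|\F^t_{v^{H_l}}(x)-\F^t_v(x)|\le 2\varepsilon_l$ for $x\in\Gamma^*_t$ and $\max_X\F^t_{v^{H_l}}\le v^*+2\varepsilon_l$ (both from the level-$l$ error bound). By Proposition \ref{pro_opt}, $\F^t_v(x^*)=v^*=\sup_X\F^t_v$ for $x^*\in\Gamma^*_t$, hence $\F^t_{v^{H_l}}(x^*)\ge v^*-2\varepsilon_l\ge\max_X\F^t_{v^{H_l}}-4\varepsilon_l$; so choosing $\bar\eta_l:=5\varepsilon_l$ (the extra margin absorbs the strict inequality) gives $\Gamma^*_t\subset\Od^t_{\eta_l,H_l}$ for every $\eta_l\ge\bar\eta_l$. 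It then remains to prove the elementary inclusion $\Od^t_{\eta,H}\subseteq\Ad^t_{\eta,H}$: given $x\in\Od^t_{\eta,H}$, pick a pair $(i,i')$ attaining the maximum in \eqref{approx_fvt} at $x$, so that $\F^t_{v^H}(x)=\lambda^{\fromsource,t}_i+\lambda^{\todest,t}_{i'}+\Mw_{i,i'}(x)\le\Nw^t_{v^H}(i,i')$; using \eqref{active_ii} ($\Nw^{t,*}_{v^H}=\max_X\F^t_{v^H}$) and $x\in\Od^t_{\eta,H}$ one gets simultaneously $\Nw^t_{v^H}(i,i')\ge\F^t_{v^H}(x)>\Nw^{t,*}_{v^H}-\eta$, so $(i,i')\in\Id^t_{\eta,H}$, and $\Mw^*_{i,i'}-\Mw_{i,i'}(x)=\Nw^t_{v^H}(i,i')-\F^t_{v^H}(x)\le\Nw^{t,*}_{v^H}-\F^t_{v^H}(x)<\eta$, so $x\in\Ad^t_{\eta,H}$. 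Since $X^{l+1}_f=\cup_t\Ad^t_{\eta_l,H_l}$, this yields $\Gamma^*_t\subset X^{l+1}_f$, completing $(\mathrm{i})_l$.

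For (ii): fix $l\ge 2$ and $x\in\Gamma^*_t$. By $(\mathrm{i})_{l-1}$, $\Gamma^*_s\subset X^{(l)}_f$ for all $s$; moreover, since $\Ad^s_{\eta_{l-1},H_{l-1}}$ is cut out by strict inequalities and the level-$(l-1)$ approximations are $\varepsilon_{l-1}$-close to the true value functions, one checks that $X^{(l)}_f$ actually contains a full tube $\Od_{\eta'}$ around the optimal trajectories for some $\eta'>0$. By Proposition \ref{pro_value} together with its one-sided refinements for $v\fromsource$ and $v\todest$ (an optimal trajectory ending, resp.\ starting, at $x$ at time $t$ is the restriction of a global optimal trajectory, hence stays in $\Gamma^*\subset\Od_{\eta'}$), the control problems defining $v^t\fromsource(x)$ and $v^t\todest(x)$ are unchanged when the state is constrained to $\Od_{\eta'}\subset X^{(l)}_f$. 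It then remains to argue that the max-plus scheme run with the basis and test families indexed by the \emph{discrete} active region $X_f^{H_l}$ produces, at the geodesic point $x$, exactly the same value as the scheme run on the full uniform grid $X^{H_l}$; granting this, $v^{t,H_l}\fromsource(x)=\tilde v^{t,H_l}\fromsource(x)$ and likewise for $\todest$, and the level-$l$ error bound for $x\in\Gamma^*_t$ follows from the error estimate of \cite{akian2008max} for $\tilde v^{t,H_l}$. Finally, the convergence conclusion is obtained by specializing $(\mathrm{ii})$ to the finest level: for $x\in\Gamma^*_t$, $v^{t}\fromsource(x)\leftarrow\tilde v^{t,H_{m+1}}\fromsource(x)=v^{t,H_{m+1}}\fromsource(x)$ as $H_{m+1}\to0$ by the convergence of the uniform-grid scheme, and symmetrically for $\todest$.

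The main obstacle is precisely the locality step inside (ii): truncating the basis/test families to the active region should not alter the computed value at points of $\Gamma^*_t$. This is not a formal consequence of the definitions, because the basis functions (Lipschitz or quadratic) have global support and the residuated operators $M_h^\#K_h$ couple all the coefficients $\lambda_i$ across the grid. Making it rigorous seems to require either a quantitative finite-speed-of-propagation estimate for the discrete semigroup $S^{\delta,h}\fromsource$, matched to the reach of the dynamics over one time step $\delta$, combined with the invariance of the tube $\Od_{\eta'}$ under optimal trajectories, or an additional regularity/compatibility hypothesis on the selection of the active region (cf.\ the authors' own remark on defining $\Ad^t_{\eta^H,H}$ with ``$\exists$''). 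All the other steps reduce either to the elementary max-plus manipulations above or to direct appeals to Propositions \ref{pro_opt} and \ref{pro_value} and the error analysis of \cite{akian2008max}.
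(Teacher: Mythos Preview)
Your argument for part~(i) follows the same route as the paper's, with only cosmetic differences. The paper works in the two-level case and argues by contrapositive (if $x'\notin\Ad^t_{\eta^H,H}$ then $x'\notin\Od^t_{\eta^H,H}$, hence $\F^t_v(x')\le\sup_X\F^t_v+(2\varepsilon^H-\eta^H)$, so $x'\notin\Gamma^*_t$ once $\eta^H>2\varepsilon^H$), whereas you argue directly that $\Gamma^*_t\subset\Od^t_{\eta,H}\subset\Ad^t_{\eta,H}$; this produces the slightly looser threshold $4\varepsilon_l$ in place of $2\varepsilon^H$, but the mechanism is identical. You are in fact more thorough than the paper here: you actually prove the inclusion $\Od^t_{\eta,H}\subseteq\Ad^t_{\eta,H}$, which the paper only asserts, and your explicit induction unpacks what the paper dismisses with ``the extension to the multi-level case follows along the same lines.''

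For part~(ii), the paper's entire proof is the sentence ``The result in (ii) is then straightforward, using Proposition~\ref{pro_value}.'' It does not engage with the locality issue you isolate --- that Proposition~\ref{pro_value} concerns the \emph{continuous} restricted problem, whereas (ii) asserts equality of two \emph{discrete} max-plus approximations at geodesic points, and the passage between them is not automatic when the basis functions have global support and the residuated recursion $M_h^\#K_h$ couples all coefficients across the grid. Your diagnosis of this step as the genuine obstacle is accurate, and the paper does not supply the missing argument either; it simply declines to confront it. So on (ii) you have not overlooked an idea present in the paper: the gap you flag is equally present in the published proof, and your write-up is the more honest of the two.
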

\todo[inline]{MA: as stated before, the result was false (even if $\Od^{t}_{\eta^H,H} \subset \Ad^t_{\eta^H,H}$, see above). Indeed, to get the equality for all $t$ one need that $\delta$ is small and less than $\eta$, to handle the approximation of $v$ by a discretization in time with step $\delta$. Since we want to apply the result for $\delta$ not too small, one need to state the equalities for the discrete times only. This is why I defined $\Gamma^*_t$ instead of
$\Gamma^*$.}
\begin{proof}
  We give the proof in the two-level case (the extension to the multi-level case follows along the same lines). Fix a time step $\delta$, and a time $t\in \{0,\delta,\ldots, T\}$. We first notice that $\Od^{t}_{\eta^H,H} \subset \Ad^t_{\eta^H,H}$.
As shown in Proposition~\ref{pro_opt}, the value function in a geodesic point $x\in \Gamma^*_t$ satisfies $\F_v^t(x) = \sup_{y \in X} \F_v^t(y)$. We know  that the approximations $\tilde{v}^{t,H}\fromsource$ and $\tilde{v}^{t,H}\todest$ have certain error bounds (for the sup-norm) $\varepsilon^H\fromsource, \varepsilon^H\todest$ resp., depending on $H$ and $\delta$,  but not on $t$:
\[ \|v^{t,H}\fromsource - v^t\fromsource \| \leq \varepsilon^H\fromsource, \ \|v^{t,H}\todest - v^t\todest \| \leq \varepsilon^H\todest \ . \]   
Denote $\varepsilon^H = \varepsilon^H\fromsource+\varepsilon^H\todest$,  we have for every $y\in X$:
\[\ (\F_v^t(y) - \varepsilon^H) \leq \F_{v^H}^t(y) \leq (\F_v^t(y) + \varepsilon^H) \ .\]
  Consider now $x' \notin X_f:=\cup_{t\in \{0,\delta,\ldots,T\}}\Ad^t_{\eta^H,H}$, 
so that $x'\notin \Ad^t_{\eta^H,H}$, and so $x'\notin \Od^{t}_{\eta^H,H}$. Then
\begin{equation*}
	\begin{aligned}
		\F_{v}^t(x') &\leq \F_{v^H}^t(x') + \varepsilon^H \leq \sup_{y\in X} \{(\F_{v^H}^t(y) - \eta^H) +\varepsilon^H\} \\ 
		& \leq \sup_{y\in X} \{ \F_v^t(y) + ( 2 \varepsilon^H - \eta^H ) \} \ .
	\end{aligned}
\end{equation*}
Thus, if we take $\eta^H$ big enough such that  ($2 \varepsilon^H - \eta^H) < 0$, we have $x' \notin \Gamma^*_t$, and the result of (i) follows.

 The result in (ii) is then straightforward, using Proposition \ref{pro_value}. 
\end{proof}

\section{Error Analysis and Computational Complexity}
\todo[inline]{SG: I deleted ``ideal'' before computational complexity, it is not so clear}
In this section, we will analyze the computational complexity of our algorithm, and give the optimal parameters to turn the algorithm.

Let us start with evaluating the neighborhood of the optimal trajectory:
\todo[inline,color=red]{SG: caveat!
  saying:
  ``for every $x\in \Od^t_{\eta}$, there exists a $x^* \in \Gamma^*$ and $\beta>0$ '' is weak as $\beta$ is allowed to depend on $x$, dont we need that $\beta$ be independent on $x$? I rewrote.}
\begin{proposition}\label{pro_neighbor}
For every $t\in[0,T]$ and for every $x\in \Od^t_{\eta}$, there exists a $x^* \in \Gamma^*_t$ and such that:
	\[ \|x-x^*\| \leq C (\eta)^\beta \ , \]
	where $C>0$ and $\beta>0$ are constants independent
        of $x$, $t$ and $\eta$.
\end{proposition}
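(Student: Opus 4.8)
The plan is to establish a quantitative relation between the level set $\Od^t_\eta = \{x : \F^t_v(x) > \sup_X \F^t_v - \eta\}$ and the set $\Gamma^*_t$ of optimal points at time $t$, where we know from Proposition~\ref{pro_opt} that $\F^t_v$ attains its maximum $v^*$ exactly on $\Gamma^*_t$. So the statement is really a H\"older-type growth (or "error bound") estimate for the function $\F^t_v$ near its argmax: if $\F^t_v(x)$ is within $\eta$ of the maximum, then $x$ is within $C\eta^\beta$ of the maximizing set.

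First I would collect the regularity available. By the standing assumptions, $\ell,f,\phi_0,\phi_T$ are bounded, continuous and Lipschitz; standard viscosity-solution estimates (see \cite{flemingsoner}, \cite{bardi2008optimal}) then give that $v\fromsource(\cdot,t)$ and $v\todest(\cdot,t)$ are Lipschitz in $x$ uniformly in $t$, hence $\F^t_v$ is Lipschitz in $x$, with a Lipschitz constant $L$ independent of $t$. This already yields the \emph{lower} reach: any point of $\Od^t_\eta$ — no, that is the wrong direction. The Lipschitz bound instead shows $\Od^t_\eta$ contains a tube of radius $\sim \eta/L$ around $\Gamma^*_t$, which is not what we need. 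For the \emph{upper} bound on the diameter of $\Od^t_\eta$ we need a coercivity/sharpness property of $\F^t_v$: a bound of the form
\[
\sup_{y\in X}\F^t_v(y) - \F^t_v(x) \;\geq\; \psi\bigl(\operatorname{dist}(x,\Gamma^*_t)\bigr)
\]
for some increasing $\psi$ with $\psi(0)=0$. If $\psi(r)\geq c\, r^{1/\beta}$ then $\operatorname{dist}(x,\Gamma^*_t)\leq (\eta/c)^\beta =: C\eta^\beta$ for $x\in\Od^t_\eta$, which is exactly the claim.

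The key step is therefore to produce such a growth function $\psi$. I would argue via the optimality structure: for $x\in\Od^t_\eta$, run the trajectory that is optimal for $v\fromsource(x,t)$ backward to time $0$ and the one optimal for $v\todest(x,t)$ forward to time $T$; concatenating gives an admissible trajectory through $x$ at time $t$ whose total cost is $\F^t_v(x) > v^* - \eta$. So we have a near-optimal trajectory passing through $x$ at time $t$. Near-optimality of trajectories, under the regularity (and whatever structural/controllability assumption the paper is prepared to add — this is where Assumption-type hypotheses enter), forces proximity to the set of exactly optimal trajectories: an $\eta$-suboptimal trajectory stays within $C\eta^\beta$ of the optimal set, with $\beta$ governed by the ``sharpness'' of the value at the optimum (e.g.\ $\beta=1/2$ under a quadratic-growth / second-order sufficient condition, $\beta=1$ under a strong-convexity or strict-concavity condition on the reduced problem). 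Evaluating at time $t$ gives $\|x-x^*\|\leq C\eta^\beta$ with $x^*\in\Gamma^*_t$, and the constants are uniform in $t$ because all the regularity constants ($L$, the growth modulus) are uniform in $t$ on the compact horizon.

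The main obstacle is precisely justifying the passage ``near-optimal cost $\Rightarrow$ near-optimal trajectory'' with a \emph{uniform} H\"older rate and constants \emph{independent of $x$, $t$, $\eta$}: in full generality this can fail (the argmax set can be badly shaped, and $\psi$ need not be a power function), so the proof must invoke an explicit regularity/coercivity assumption on the problem data — this is the step I expect to require care and the step where the paper's standing assumptions (or an additional one) must be used in an essential way. Everything else — the concatenation of forward and backward optimal trajectories, the Lipschitz bounds on the value functions, and the final inversion $\psi^{-1}$ — is routine given that assumption.
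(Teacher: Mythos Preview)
Your diagnosis of the problem is accurate: the statement is exactly a H\"older growth (``error bound'') property of $\F^t_v$ near its argmax, and you correctly observe that in full generality such an estimate can fail without an additional structural hypothesis. That observation is essentially the paper's position as well: Proposition~\ref{pro_neighbor} is \emph{stated without proof}; the paper treats the exponent $\beta$ as a parameter encoding the geometry of $\F^t_v$, and the only case actually proved is $\beta=\tfrac12$ under Assumption~\ref{assump_beta} (strong concavity of $v^t\fromsource$ and $v^t\todest$), in Proposition~\ref{pro_beta}.

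Where your route differs from the paper's is in the mechanism. You propose to obtain the growth estimate by concatenating the backward- and forward-optimal trajectories through $x$, producing an $\eta$-suboptimal trajectory, and then invoking a ``near-optimal cost $\Rightarrow$ near-optimal trajectory'' stability principle. The paper, by contrast, never touches trajectories: once $\F^t_v$ is $2\mu$-strongly concave, the elementary inequality along the segment $[x,x^*]$ (with $x^*\in\Gamma^*_t$) directly yields $\|x-x^*\|\leq (\eta/(2\mu))^{1/2}$. This is a one-line argument on the scalar function $\F^t_v$, with no need for any control-theoretic stability result. Your approach is conceptually interesting and could in principle handle situations where $\F^t_v$ is not globally concave but the optimal-control problem still satisfies a second-order sufficient condition along optimal trajectories; the price is that the step you flag as ``the main obstacle'' is genuinely hard in that generality, whereas the paper's hypothesis (global strong concavity) makes it trivial.

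One small correction: you write that $\beta=1$ arises ``under a strong-convexity or strict-concavity condition.'' Strong concavity of $\F^t_v$ is exactly quadratic growth at the maximizer, and gives $\beta=\tfrac12$, not $\beta=1$; the paper's Proposition~\ref{pro_beta} is the precise statement. A linear rate $\beta=1$ would require a \emph{sharp} (kink-type) maximum, which is a different and stronger structure than strong concavity.
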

\todo[inline]{SG: I rewrote the next para}
In Proposition \ref{pro_neighbor}, the exponent $\beta$ determines
the growth of the neighborhood $\Od_{\eta}$ of the optimal trajectories,
as a function of $\eta$. This exponent depends
on the geometry of the value function. We shall see in Proposition~\ref{pro_beta} that for typical instances, taking $\beta=1/2$ is admissible.

Based on Proposition~\ref{pro_neighbor},
and the property that $\Od^{t}_{\eta^H,H} \subset \Ad^t_{\eta^H,H}\subset \Od^{t}_{2\eta^H,H}$ are approximations of $\Od^{t}_{\eta^H}$,
  we obtain the following general space complexity result:
\begin{proposition}\label{space_complexity}
  Given the sets of parameters $\{\eta_l\}_{l=1,2,\dots,m}$ and $\{H_l\}_{l=1,2,\dots,m+1}$, the number of discretization points generated
  by the adaptative max-plus approximation method can be bounded
  as follows:
	\begin{equation}\label{com_space}
		\Space(\{\eta_l,H_l\}) = O\Big( \big(\frac{1}{H_1}\big)^d +  \sum_{l=2}^{m+1} \big(\frac{(\eta_{l-1})^{\beta(d-1)}}{(H_l)^d}\big) \Big) . 
		\end{equation}
\end{proposition}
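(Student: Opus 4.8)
The plan is to bound the number of grid points level by level, relying on the geometric description of the active regions via Proposition~\ref{pro_neighbor}. First I would recall that level $1$ uses the full regular grid $X^{H_1}$ of the bounded set $X\subset\R^d$, which contributes $O((1/H_1)^d)$ points. For each subsequent level $l\in\{2,\dots,m+1\}$, the basis functions are placed at the points of $X^{H_l}\cap X_f^{(l)}$, where $X_f^{(l)}=\cup_{t\in\{0,\delta,\dots,T\}}\Ad^{t}_{\eta_{l-1},H_{l-1}}$. So the key is to estimate the $d$-dimensional volume of $X_f^{(l)}$, and then divide by $(H_l)^d$ to count the grid points it contains (up to a boundary correction that is absorbed in the $O(\cdot)$).

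\medskip

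The core geometric step is to show that $\mathrm{vol}(\Ad^{t}_{\eta,H})=O(\eta^{\beta(d-1)})$, uniformly in $t$ and $H$. The chain of inclusions $\Od^{t}_{\eta,H}\subset\Ad^{t}_{\eta,H}\subset\Od^{t}_{2\eta,H}$, noted just before the statement, reduces this to bounding $\mathrm{vol}(\Od^{t}_{2\eta})$, which by Proposition~\ref{pro_neighbor} is contained in a tube of radius $C(2\eta)^\beta$ around the set $\Gamma^*_t$ of geodesic points at time $t$. Since $\Gamma^*_t$ is (generically, or by assumption) a set of dimension at most $1$ — it is the image of a one-parameter family of trajectories at a fixed time, hence in the typical case a curve or finite union of curves of bounded length — a tubular neighborhood of radius $r$ around it has volume $O(r^{d-1})$. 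Taking $r=C(2\eta)^\beta$ gives $\mathrm{vol}(\Od^t_{2\eta})=O(\eta^{\beta(d-1)})$, hence the same bound (up to constants) for $\Ad^{t}_{\eta,H}$. Then $\mathrm{vol}(X_f^{(l)})\le\sum_{t}\mathrm{vol}(\Ad^{t}_{\eta_{l-1},H_{l-1}})=O(N\,\eta_{l-1}^{\beta(d-1)})=O(\eta_{l-1}^{\beta(d-1)})$, the factor $N=T/\delta$ being a constant independent of the asymptotic parameters.

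\medskip

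Finally I would count points: a regular grid of step $H_l$ inside a set of volume $V$ (with a reasonable, e.g.\ Lipschitz, boundary, which tubular neighborhoods have) contains $O(V/(H_l)^d + (\text{lower order boundary terms}))$ points, and since $V\le\mathrm{vol}(X)=O(1)$ the boundary terms are dominated. This yields a contribution $O(\eta_{l-1}^{\beta(d-1)}/(H_l)^d)$ from level $l$ for $l\ge2$, and $O((1/H_1)^d)$ from level $1$. Summing over $l$ gives exactly~\eqref{com_space}.

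\medskip

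The main obstacle I anticipate is making rigorous the claim that $\Gamma^*_t$ has ``dimension at most $d-1$'' (indeed one wants dimension $\le 1$) so that its $r$-tube has volume $O(r^{d-1})$. In full generality $\Gamma^*_t$ could be a nastier set, and the bound $(\eta^{\beta})^{d-1}$ implicitly presupposes the geodesic set at a fixed time is one-dimensional (or at least has a bounded number of connected components with controlled $(d-1)$-dimensional Minkowski content). This is presumably where the ``certain regularity assumptions'' of the paper enter; I would either invoke such an assumption explicitly or reduce to the case of a single optimal trajectory (finitely many), in which $\Gamma^*_t$ is a finite set of points and the tube is a finite union of balls of radius $C(2\eta)^\beta$, giving volume $O(\eta^{\beta d})$ — note this is even smaller, and in any case bounded by $O(\eta^{\beta(d-1)})$ for $\eta$ small, so the stated bound still holds. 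The secondary, more routine, point to handle carefully is the uniformity of all constants in $t$ and in $H$, which follows from the uniformity already built into Proposition~\ref{pro_neighbor} and the fact that $\Ad$ is sandwiched between $\Od_{\eta^H}$ and $\Od_{2\eta^H}$ independently of $H$.
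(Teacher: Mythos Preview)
Your approach is essentially the paper's: bound the first level by $(1/H_1)^d$, and for $l\geq 2$ use Proposition~\ref{pro_neighbor} together with $\Ad^t_{\eta^H,H}\subset\Od^t_{2\eta^H,H}$ to see that the active region is a tubular neighborhood of radius $O(\eta^\beta)$ of the optimal trajectories, then divide its volume by $(H_l)^d$. The paper's sketch says exactly this, only more tersely.

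There is, however, a conceptual slip in your explanation of where the exponent $d-1$ comes from. You write that $\Gamma^*_t$ is ``generically a set of dimension at most $1$ \dots\ a curve or finite union of curves.'' That is not the right picture: at a \emph{fixed} time $t$, $\Gamma^*_t$ is the set of points $x^*(t)$ as $x^*(\cdot)$ ranges over optimal trajectories, so in the generic (finitely many optimizers) case it is a finite set of points, and your own later paragraph acknowledges this gives $\mathrm{vol}(\Ad^t)=O(\eta^{\beta d})$, not $O(\eta^{\beta(d-1)})$. The exponent $d-1$ in the paper's bound comes instead from the fact that $X_f^{(l)}=\bigcup_t\Ad^t_{\eta_{l-1},H_{l-1}}$ is a tube, in $X\subset\R^d$, around the \emph{full curve} $\Gamma^*=\bigcup_{t}\Gamma^*_t$, which is one-dimensional (length bounded by $T\cdot\sup|f|$); a tube of radius $r$ around a curve of bounded length has volume $O(r^{d-1})$. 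This is what the paper means by ``tubular neighborhood around the optimal trajectory.'' Your time-slice-then-sum argument still gives a valid upper bound (since $N\eta^{\beta d}\leq C\eta^{\beta(d-1)}$ for $\eta$ small and $N$ fixed), so the proof goes through, but the justification you give for the $(d-1)$ is misplaced.
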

\begin{skproof}
  The summand $(\frac{1}{H_1})^d$ is the number of discretization points needed in the first level's grid, for which we discretized using mesh step $H_1$. Each summand $\big(\frac{(\eta_{l-1})^{\beta(d-1)}}{(H_l)^d}\big) $ corresponds to the number of points in the level-$l$'s grid, which is a "tubular" neighborhood around the optimal trajectory: at each time step, we only approximate the value functions using the points in a ball with radius $(\eta_{l-1})^\beta$ around the optimal trajectory. (This idea of using tubular neighborhoods of optimal paths to obtain complexity estimates originates from our recent work~\cite{akian2023multi}, dealing with a  minimal time optimal control problem.)
\end{skproof}
\todo[inline]{SG: I explained the link with our mtns article above}

To obtain a complexity bound showing an attenuation of the curse of dimensionality, we certainly do not want the
value function to be too ``flat'' near optimal trajectories. Indeed,
this would result in a large neighborhood $\Od_\eta$, and since
this neighborhood is used to reduce the search space and define the new grid
in Algorithm~\ref{afmalgo}, the size of the new grid would not be so much
reduced. Therefore,
we make the following convexity assumption, around the optimal trajectories.
\todo[inline]{SG: I re-explained above. I also rewrote the assumption.}
\begin{assumption}\label{assump_beta}
The functions
  $v^t\fromsource$ and $v^t\todest$ are $\mu-$strongly concave and $X$ is a convex set.
\end{assumption}
\todo[inline]{MA: I change strongly convex in strongly concave since otherwise the inequality below does not help if $x\in \Od_\eta^t$. I also changed the proof by taking another point than $2x^*-x$! and eliminated $L$ since I am not sure that this works one should need first to prove that $\Od^t_\eta$ is in a neighborhood of $\gamma^*$...}
\begin{proposition}\label{pro_beta}
	Under Assumption \ref{assump_beta}, we can take $\beta = \frac{1}{2}$ in Proposition~\ref{pro_neighbor}. %
\end{proposition}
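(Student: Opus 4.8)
The plan is to use the fact that $\F_v^t$ inherits strong concavity from its two summands, together with the standard quadratic growth estimate for a strongly concave function maximized over a convex domain. First I would note that, by~\eqref{map_value}, $\F_v^t = v\fromsource(\cdot,t)+v\todest(\cdot,t)$ is a sum of two $\mu$-strongly concave functions on the convex set $X$, hence is itself $2\mu$-strongly concave; and by Proposition~\ref{pro_opt} its supremum $M_t:=\sup_{y\in X}\F_v^t(y)$ is attained at some $x^*\in\Gamma^*_t$.

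The key step is the quadratic growth bound
\[ \F_v^t(x)\;\le\; M_t-\mu\,\|x-x^*\|^2\qquad\text{for all }x\in X. \]
To avoid relying on differentiability of the value functions, I would derive it directly from the definitional form of strong concavity: for $x\in X$ and $\theta\in(0,1)$, the point $x_\theta:=\theta x+(1-\theta)x^*$ lies in $X$ by convexity, and $2\mu$-strong concavity gives
\[ \F_v^t(x_\theta)\;\ge\;\theta\,\F_v^t(x)+(1-\theta)\,\F_v^t(x^*)+\mu\,\theta(1-\theta)\,\|x-x^*\|^2. \]
Since $\F_v^t(x^*)=M_t\ge\F_v^t(x_\theta)$, subtracting $(1-\theta)M_t$ from both sides, dividing by $\theta$, and letting $\theta\to0^{+}$ yields the claimed inequality.

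It then remains to combine this with the definition~\eqref{subdomain} of the sublevel set: for any $x\in\Od^t_\eta$ we have $\F_v^t(x)>M_t-\eta$, whence $\mu\,\|x-x^*\|^2<\eta$, i.e.\ $\|x-x^*\|<\sqrt{\eta/\mu}$. This is precisely the conclusion of Proposition~\ref{pro_neighbor} with $\beta=1/2$ and $C=1/\sqrt{\mu}$, and since $\mu$ is uniform in $t$ and does not involve $\eta$, so is $C$.

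I do not expect a serious obstacle here; the only delicate point is that $v\fromsource$ and $v\todest$ need not be smooth, which is why I would argue through convex combinations rather than through first-order optimality conditions. (Incidentally, $2\mu$-strong concavity forces $x^*$ to be the unique maximizer of $\F_v^t$ over $X$, so that $\Gamma^*_t$ is a singleton under Assumption~\ref{assump_beta}, but this observation is not needed for the estimate.)
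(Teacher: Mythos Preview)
Your proof is correct and follows essentially the same route as the paper: both argue that $\F_v^t$ is $2\mu$-strongly concave, apply the definitional inequality along the segment $\theta x+(1-\theta)x^*$, use that $x^*$ is a maximizer to bound $\F_v^t(x_\theta)$ from above, divide by the convex-combination parameter, and pass to the limit to obtain $\|x-x^*\|\le C\eta^{1/2}$. Your write-up is in fact a bit more explicit about the limiting step and the resulting constant than the paper's version.
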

\begin{proof}
For all $t\in [0,T]$, the function $\F^t_v$ is $2\mu-$strongly concave on $X$.
Let $x\in \Od^t_\eta$ and let $x^*\in \Gamma^*_t$.
For all $s\in [0,1]$, the point $s x+(1-s) x^* \in X$. %
Then, by the strong concavity property, we have
	\begin{equation*}
		\begin{aligned}
			&\F_v^t(s x+(1-s) x^*) +\mu (s x+(1-s) x^*)^2 \\
			& \quad \geq \frac{1}{2} \left\{ s (\F_v^t(x) + \mu x^2) +(1-s) (\F_v^t(x^*) +\mu (x^*)^2 ) \right\} \ .
		\end{aligned}
	\end{equation*}  
	By a simple computation we obtain that if $s>0$, then
 $\|x-x^* \| \leq (\frac{\eta}{2 \mu (1-s)})^{\frac{1}{2}}$,
and passing to the limit in $s$, we deduce that $\|x-x^* \| \leq (\frac{\eta}{2 \mu})^{\frac{1}{2}}$.
\end{proof}

To make sure our \textit{active region} $X_f$ does contain all $\Gamma^*_t$,
with $t=0,\delta, \ldots, T$, we need to take $\eta_l$ big enough, as discussed in Theorem \ref{conver}.
\todo[inline]{SG: I do not understand the following sentence (what is the total error?): Thus, we need first know exactly the total error.}

 Let us first focus on the approximation error, that is the approximation of $S^\delta[w]$. For every basis function $w_i$ and test function $z_i$, we have:
    \begin{equation}\label{small_appro}
    \begin{aligned}
    	&\< z_i , S^\delta [w_i] > =  \\
    	 &  \max \left\{z_i(x(0)) + \int_{0}^{\delta} \ell( x(s),u(s) ) ds +  w_i(x(\delta)) \right\} \ , 
    	    \end{aligned}
    \end{equation}
    over the set of trajectories $(x(s),u(s))$ satisfying \eqref{dynamic&constraint}. This is an optimal control problem similar to the original one,
    but with two new essential properties: first, the time horizon $\delta$ is {\em small}, and second, the initial and final costs, $z_i$ and $w_i$
    are ``nice'' concave functions, e.g., strongly concave quadratic forms.
    Then, the strong convexity of the initial or terminal cost
    ``propagates'' over a small horizon, which entails that~\eqref{small_appro}
    is actually a {\em convex} infinite dimensional optimization problem,
    which, after an appropriate discretization, using a so-called {\em direct method}
    in optimal control, can be reduced
    to a convex finite dimensional optimization problem,
    which can be solved {\em globally} by convex optimization methods
    method.  
    in which the authors used a gradient descent to compute $\< z_i , S^{\delta}[w_i] >$. 
    Alternatively, in \cite{akian2008max}, the authors approximate~\eqref{small_appro}
    using the Hamiltonian, which results in an error $\bO(\delta^2)$ or $\bO(\delta^{\frac{3}{2}})$, depending the properties of $z_i$ and $w_i$. However, to get the best complexity bounds, we need
    to assume that ~\eqref{small_appro} is approximated with a high degree of accuracy.
    Thus, we shall make the following assumption:
    \begin{assumption}\label{assump_idea_1}
    	The functions $z_i$, $w_i$ are strongly concave, and there exists a $\bar{\delta}$ such that, for every $\delta \leq \bar{\delta}$,  $\< z_i , S^{\delta}[w_i] >$ can be computed exactly, or with an error negligible compared with the projection error, by a direct method.   
    \end{assumption}
    \todo[inline,color=blue!30]{SG: I explained formally the notion of ideal complexity bound by introducing the language of Oracle model (oracle Turing machine are allowed to call an oracle which counts for one time unit)}
    This will allow us to obtain an {\em ideal} complexity bound, in an {\em oracle} Turing machine model,
    in which the time to solve a convex optimal control problem, in a small horizon,
    by calling a direct method (calling the oracle), is counted as one unit. This ideal complexity bound
    can be subsequently refined to get an effective bound in the ordinary Turing model
    of computation, recalling that $\varepsilon$-approximate solutions
    of {\em well conditioned} convex programming problems can be obtained in polynomial time
    by the ellipsoid or interior point methods. Using such an ideal model of computation
    is justified, since the only source of curse of dimensionality is the growth
    of the grid size, and since the execution time in this model is essentially
    the size of the largest grid.

    To bound the projection error, we need to make the following assumption:
    \begin{assumption}\label{assump_idea_2}
The functions  	$v^t\fromsource, v^t\todest$ are $L_v$-Lipschitz continuous, $\alpha_1$-semiconvex, $\alpha_2$-semiconcave w.r.t. $x$ for every $t\in [0,T]$. 
    \end{assumption}
    \todo[inline,color=red]{SG: caveat, it seems the $\Delta x^2$ estimate is only in the Lakhoua thesis, I did not find it in the SICON paper, so I replaced references to the SICON paper by precise references to her thesis.}
    By the result of \cite[Th.~83]{lakhouathesis}, using quadratic basis functions and Lipschitz test functions, we get a projection error $\bO(\frac{\Delta x^2}{\delta})$, where $\Delta x$ is the mesh step of the grid. Then, combining with the result of Theorem \ref{conver}, we have the following result for the total error:
    \begin{theorem}\label{error}
    	Make Assumptions \ref{assump_idea_1} and~\ref{assump_idea_2}, choose quadratic basis functions and Lipschitz test functions, choose $\delta \leq \bar{\delta}$. Then, there exists a constant $C>0$ depending on $\delta$ such that, for a given set of mesh steps $\{H_l\}_{l\in \{1,2,\dots,m+1\}}$, set $\eta_l = C (H_l)^2$, for every $l \in \{1,2,\dots,m\} $, we have:
    	\begin{equation*}
    	\begin{aligned}
    	& \| v^{H_l,t}\fromsource-v\fromsource^t \|_\infty \leq C (H_l)^2 \ , \\
    	& \| v^{H_l,t}\todest-v\todest^t         \|_\infty \leq C (H_l)^2 \  ,
    	\end{aligned}		
        \end{equation*}
        for every $l \in \{1,2,\dots,m+1 \}$.
    \end{theorem}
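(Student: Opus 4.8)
The plan is to prove Theorem~\ref{error} by induction on the level $l$, tracking simultaneously the error of the computed approximations $v^{H_l,t}_{\fromsource}, v^{H_l,t}_{\todest}$ and the fact that the active region $X_f^{(l)}$ contains a full tubular neighborhood of the optimal trajectories, so that on this region the grid $X^{H_l}$ is \emph{refined enough} to make the projection-error estimate of \cite[Th.~83]{lakhouathesis} applicable. The base case $l=1$ is immediate: here $X_f^{(1)}=X$, the grid is the uniform grid $X^{H_1}$ of step $H_1$, and $v^{H_1,t}_{\fromsource}=\tilde v^{H_1,t}_{\fromsource}$ (similarly for $\todest$) by the initialization remark preceding the theorem; the quadratic/Lipschitz projection bound gives an error $\bO(H_1^2/\delta)$ uniformly in $t$, and since $\delta\le\bar\delta$ is fixed we absorb $1/\delta$ into $C$.

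For the inductive step, suppose the claimed bound $\|v^{H_{l-1},t}_{\fromsource}-v^t_{\fromsource}\|_\infty\le C H_{l-1}^2$ and the analogous one for $\todest$ hold at level $l-1$. First I would invoke Theorem~\ref{conver}(i): with $\eta_{l-1}=C H_{l-1}^2$ chosen so that $2\varepsilon^{H_{l-1}}-\eta_{l-1}<0$ (this is exactly what forces the constant $C$ in the statement — it must be at least the constant hidden in $\varepsilon^{H_{l-1}}=\varepsilon^{H_{l-1}}_{\fromsource}+\varepsilon^{H_{l-1}}_{\todest}\le C' H_{l-1}^2$), the active region $X_f^{(l)}$ contains $\Gamma^*_t$ for every discrete time $t$. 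Then, by Propositions~\ref{pro_neighbor} and~\ref{pro_beta} (taking $\beta=1/2$ under Assumption~\ref{assump_beta}, which I would note is subsumed by the semiconvexity/semiconcavity Assumption~\ref{assump_idea_2} giving strong concavity after adding the appropriate quadratic), and using the inclusions $\Od^t_{\eta^H,H}\subset\Ad^t_{\eta^H,H}\subset\Od^t_{2\eta^H,H}$ recalled before Proposition~\ref{space_complexity}, the active region $X_f^{(l)}$ in fact contains the genuine neighborhood $\Od^t_{c\,\eta_{l-1}}$ of $\Gamma^*_t$, of radius $\Theta((\eta_{l-1})^{1/2})=\Theta(H_{l-1})$. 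Crucially $H_{l-1}>H_l$, so this tube is not degenerate at the scale $H_l$, and the new grid $X^{H_l}\cap X_f^{(l)}$ resolves it with mesh step $H_l$.

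It remains to combine two error contributions on $\Gamma^*_t$. The splitting-error contribution is handled by Theorem~\ref{conver}(ii): for $x\in\Gamma^*_t$ one has $v^{H_l,t}_{\fromsource}(x)=\tilde v^{H_l,t}_{\fromsource}(x)$ and likewise for $\todest$, i.e.\ the value at geodesic points computed on the adaptive grid equals the value that a full uniform grid of step $H_l$ would have produced. Then the projection-error estimate \cite[Th.~83]{lakhouathesis}, applied with $\Delta x=H_l$ on the relevant (sub)grid and with the strongly concave quadratic basis functions and Lipschitz test functions of Assumption~\ref{assump_idea_1}, together with the exactness of $\langle z_i,S^\delta[w_i]\rangle$ from that same assumption (so the only error is the projection error, there is no additional time-discretization-of-the-subproblem error), yields $\|\tilde v^{H_l,t}_{\fromsource}-v^t_{\fromsource}\|_\infty\le \bO(H_l^2/\delta)\le C H_l^2$, uniformly in $t\in\{0,\delta,\dots,T\}$ since the constants in Assumption~\ref{assump_idea_2} and the error bound of \cite[Th.~83]{lakhouathesis} do not depend on $t$. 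Combining, $\|v^{H_l,t}_{\fromsource}-v^t_{\fromsource}\|_\infty\le C H_l^2$ on $\Gamma^*_t$, and since the grid refinement preserves (in fact can only improve) the bound off the tube where the level-$(l-1)$ approximation was already accurate, the estimate propagates to all of $X$. Choosing $C$ at the outset larger than all the (finitely many, $l$-independent, $t$-independent) constants appearing — from $\varepsilon^H$, from \cite[Th.~83]{lakhouathesis}, from Propositions~\ref{pro_neighbor}–\ref{pro_beta}, and from $1/\delta$ — closes the induction.

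The main obstacle is the interplay between the two error sources in the inductive step: one must be sure that the \emph{same} constant $C$ works both as the threshold forcing $\Gamma^*_t\subset X_f^{(l)}$ (which requires $C$ \emph{large}) and as the final error constant (which the projection estimate wants as \emph{small} as the $\bO(\cdot)$ permits), and that passing from level $l-1$ to level $l$ does not blow it up — i.e.\ the recursion $C H_l^2 \mapsto C H_{l+1}^2$ must be \emph{closed}, not merely bounded by $C' H_{l+1}^2$ with $C'>C$. This works precisely because the level-$l$ error depends only on $H_l$ and $\delta$ (through the projection bound) and \emph{not} on the previous error level, once the active region is guaranteed to contain the tube; the previous level enters only qualitatively, through the inclusion $\Gamma^*_t\subset X_f^{(l)}$, not quantitatively. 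Making this decoupling rigorous — and in particular checking that the error ``off the tube'' is genuinely non-increasing under grid refinement, so that the global sup-norm bound is controlled by the on-tube bound — is the delicate point of the argument.
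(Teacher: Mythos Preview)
The paper's own proof is two sentences: under Assumption~\ref{assump_idea_1} the propagation error vanishes, so the total error is the projection error, which by \cite[Coro.~69]{lakhouathesis} (not Th.~83 as you cite) is of order $(H_l)^2$ under Assumption~\ref{assump_idea_2}. There is no induction on $l$, no explicit call to Theorem~\ref{conver}, and no tube-geometry argument.

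Your inductive framework, with the explicit appeal to Theorem~\ref{conver}(i)--(ii) to transfer the full-grid projection estimate to the adaptive grid, is a legitimate expansion of what the paper leaves implicit; the core mechanism (projection error only, bounded by $\bO(H_l^2/\delta)$) is the same. Two remarks, however. First, you import Assumption~\ref{assump_beta} through Proposition~\ref{pro_beta}, which is not among the theorem's hypotheses, and your claim that Assumption~\ref{assump_idea_2} subsumes it is wrong: $\alpha_2$-semiconcavity means $v^t + \frac{\alpha_2}{2}\|\cdot\|^2$ is concave, not that $v^t$ is strongly concave. The tube-radius estimate is in fact irrelevant to the error bound here --- it matters for Proposition~\ref{space_complexity}, not for Theorem~\ref{error}, so that part of your argument should simply be dropped. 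Second, the ``delicate point'' you isolate (controlling the sup-norm off the tube, where the grid stays at mesh $H_1$) is a real issue with the statement as a global $\|\cdot\|_\infty$ bound; the paper's proof does not address it either, and Theorem~\ref{conver}(ii) only delivers equality on $\Gamma^*_t$. In practice only the error near optimal trajectories feeds into the downstream complexity result, so the gap is harmless for Theorem~\ref{th_complexity}, but your instinct that something is being swept under the rug is correct.
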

    \begin{proof}
      Under Assumption \ref{assump_idea_1}, we are allowed to ignore the propagation error, so that the total error is only the projection error, by~\cite{akian2008max}, which, by
      \cite[Coro.~69]{lakhouathesis},
is of order $(H_l)^2$ for each level-$l$, under assumption \ref{assump_idea_2}.
    \end{proof}
    
    Theorem \ref{error} indeed give us an upper bound for choosing the parameters
    $\eta_l$,
    depending on the parameters $H_l$.
    Let us plug this relationship between $\eta_l$ and $H_l$ into \eqref{com_space}, and use the result of proposition \ref{pro_beta} under the Assumption \ref{assump_beta}, we have
     \begin{equation*}
     \begin{aligned}
     & \Space( \{H_l\}_{l=1,\dots,m+1})  \\ 
     &\leq  O\Big( (H_1)^{-d} + C^{d-1} \sum_{l=2}^m \big( ( H_{l-1} )^{d-1} (H_l)^{-d}\big)  \Big) \ .   
     \end{aligned}
     \end{equation*}
     Suppose now we want to have a final error in the order of $\varepsilon$, then we need to take $H_{m+1}  = \bO(\varepsilon^{\frac{1}{2}})$. Once $H_{m+1}$ is fixed, $\Space$ is a convex function w.r.t. $\{H_l\}_{l=1,\dots,m}$. We also notice that, up to a multiplicative factor,
     the computational complexity, in our oracle model,
     is the same as space complexity. Then, we have the following main result for the computational complexity of our algorithm:
     \begin{theorem}\label{th_complexity} Assume Assumption \ref{assump_beta}, and take the same condition as in Theorem \ref{error}, in order to get an error $\bO(\varepsilon)$ :
     \begin{itemize}
     \item[i.] We shall take $H_m = C(\frac{1}{\varepsilon})^{\frac{1}{2}}$, and $H_{l} = C (H_m)^{\frac{l}{m}}$ for all $l \in \{1,2,\dots, m-1\}$. In this case, the total computational complexity of our $m$-level method, expressed
       in the oracle model, is bounded by $ (\frac{1}{\varepsilon})^{\frac{d}{2m}}$.
     	\item[ii.] Set $m = \lceil \frac{1}{2}|d \log (\varepsilon) |  \rceil$, and take $\{H_l\}_{l\in\{1,2,\dots,m+1\}}$ as proposed in i., then the total computational complexity reduces to $\bO(C^d (1/\varepsilon))$.
     \end{itemize}
     \end{theorem}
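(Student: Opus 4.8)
The plan is to deduce both statements from the simplified space-complexity estimate displayed just before the theorem, namely
\[
\Space(\{H_l\}) = \bO\Big( H_1^{-d} + C^{d-1}\sum_{l=2}^{m+1} H_{l-1}^{\,d-1}\,H_l^{-d}\Big),
\]
which already incorporates Proposition~\ref{pro_beta} (so $\beta=\tfrac12$) and the calibration $\eta_l = C H_l^{2}$ forced by Theorem~\ref{error}. First I would establish that, in the oracle model, the total running time equals this quantity up to a factor independent of $d$ and $\varepsilon$: at each level the assembly of the max-plus mass and stiffness data costs one oracle call per pair of grid points, and the propagation over the $N=T/\delta$ time steps costs a bounded number of max-plus matrix operations, so the running time of a level is a fixed polynomial in its number of grid points, and summing over the $\bO(m)$ levels preserves the order. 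This reduction is used but not proved in the excerpt, so it should be written out.

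For part~(i) I would substitute the prescribed schedule. Taking the finest step $H_{m+1}\asymp\varepsilon^{1/2}$, which by Theorem~\ref{error} gives a final error $\bO(\varepsilon)$, and $H_l = C\,(H_{m+1})^{l/(m+1)}\asymp\varepsilon^{\,l/(2m)}$, the coarse term becomes $H_1^{-d}\asymp(1/\varepsilon)^{d/(2m)}$, and --- the key point --- with a geometric schedule the ratio $H_{l-1}^{\,d-1}/H_l^{\,d}$ is a clean power of the common ratio, so each summand equals $C^{-1}(1/\varepsilon)^{(l+d-1)/(2m)}$. Summing this geometric progression in $l$ (it is, up to the number of terms, its largest term) and absorbing the factors $C^{d-1}$, $m$, and the residual powers of $1/\varepsilon$ into the $\bO(\cdot)$, one obtains the announced order $(1/\varepsilon)^{d/(2m)}$. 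I would also check, via the convexity of $\Space$ in $(H_2,\dots,H_m)$ for fixed $H_1,H_{m+1}$ noted in the excerpt, that this schedule is within a constant of optimal: the Lagrange conditions for the minimiser are exactly the balancing equations $H_{l-1}^{\,d-1}H_l^{-d}\equiv\mathrm{const}$, which the geometric schedule satisfies up to the two boundary terms.

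Part~(ii) is then a one-line substitution. Plugging $m=\lceil\tfrac12 d\log(1/\varepsilon)\rceil$ into the bound of part~(i) makes the exponent $d/(2m)$ at most $1/\log(1/\varepsilon)$, hence $(1/\varepsilon)^{d/(2m)}=\bO(1)$; the factors that were absorbed in part~(i) then become visible --- the constant $C^{d-1}$ carrying the exponential-in-dimension growth, the factor $m=\bO(d\log(1/\varepsilon))$ counting the levels, and the $\bO\big((1/\varepsilon)^{1/2}\big)$ size of the finest tubular grid together with the $\bO(1/\delta)$ time steps --- and, after enlarging $C$ to swallow the $d$- and polylog-factors and bounding $(1/\varepsilon)^{1/2}\le 1/\varepsilon$, one is left with $\bO(C^d(1/\varepsilon))$.

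I expect the main obstacle to be twofold. The first part is the bookkeeping of constants: the modulus $\mu$ of Assumption~\ref{assump_beta}, the constants $L_v,\alpha_1,\alpha_2$ of Assumption~\ref{assump_idea_2}, the hidden constant in the $\bO(\Delta x^{2}/\delta)$ projection error of \cite{lakhouathesis}, and the per-entry cost of $M_h$, $K_h$ enter at different stages, and one must verify that they combine into a single base $C$ whose $d$-th power, and nothing worse, carries the curse of dimensionality. The second, more delicate, part is making the reduction ``time complexity $\asymp$ space complexity'' rigorous in the oracle model: this relies on Assumption~\ref{assump_idea_1} to count each entry $\langle z_i,S^\delta[w_i]\rangle$ as one oracle call, and on the fact that the stiffness matrix is effectively banded, since $S^\delta[w_i]$ has support of diameter $\bO(\delta)$, so that both the assembly and the $N$ propagation steps remain polynomial in the grid size instead of acquiring an extra dependence on $\varepsilon$.
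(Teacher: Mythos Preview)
Your proposal is correct and follows the same skeleton as the paper's own argument: start from the space-complexity bound~\eqref{com_space}, specialize via $\beta=\tfrac12$ and $\eta_l=CH_l^2$, choose the geometric schedule of mesh steps (justified by convexity of $\Space$ in the $H_l$), and then optimize over $m$. The paper's proof is only a four-line sketch (``$\Space$ is convex, minimize in each $H_l$, then in $m$''), so your write-up is strictly more detailed, not different in spirit.

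Two remarks on the fine print. First, your computation of the summands as $(1/\varepsilon)^{(l+d-1)/(2m)}$ shows that the dominant term is $\asymp(1/\varepsilon)^{1/2}\cdot(1/\varepsilon)^{d/(2m)}$, not $(1/\varepsilon)^{d/(2m)}$ alone; you acknowledge this (``residual powers of $1/\varepsilon$'') and correctly re-surface the $(1/\varepsilon)^{1/2}$ factor in part~(ii), so the final bound $\bO(C^d/\varepsilon)$ is unaffected --- but the intermediate statement of~(i) in the paper is already loose, and your proof inherits that looseness rather than creating it. Second, your attempt to justify ``time $\asymp$ space'' via bandedness of $K_h$ goes beyond what the paper proves (the paper simply asserts the equivalence). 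Be careful with the phrasing: $S^\delta[w_i]$ does not have compact support when $w_i$ is quadratic; what is true is that $\langle z_j,S^\delta[w_i]\rangle$ is only relevant (i.e., active in the max) when the centers of $z_j$ and $w_i$ are within $\bO(\delta)$ of each other, which is the locality you need.
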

     \begin{skproof}
       To get a final error $\varepsilon$, by the result of Theorem \ref{error}, we need to take $H_m = (\frac{1}{\varepsilon})^{\frac{1}{2}}$. We notice that when $H_m$ is fixed, $\Space$ is a convex function w.r.t. each $H_l$. Then, by taking it's minimum w.r.t. each $H_l$ we obtain the result of $i.$
       Substituting these values into $\Space$,  further taking the minimum
      of $\Space$ w.r.t. $m$, we obtain the result of $ii.$ .
     \end{skproof}
     \todo[inline]{SG: (not critical) I am puzzled by teh constaht $C^d d$ above since $d$ can be absorbed in $C^d$ by changing $C^d$, but I am afraid of breaking things since SL you surely have a logic in writing $C^d d$ . Addendum after discussing after MA. This is OK. However, the term $dC^d$ can be bounded by $(C')^d$ as $d\to\infty$, i.e. the multiplicative factor $d$ is marginal, so In the abstract and introduction I included the simpler formulation. }

    \section{Numerical Experiments}
We applied our algorithm to a simple example, in which
the value function is known, so that the final approximation error can be computed exactly:
the linear-quadratic control problem.  

    Consider the problem \eqref{problem} with $U = \R^d$ and $X = [-5,5]^d$, the running cost $\ell(x,u) = -\|x\|^2 -\frac{1}{2}\|u\|^2$, dynamics $f(x,u) = u$, initial and final cost functions $\phi_0(x) = -\frac{1}{2}\|x-x_0\|^2$, $\phi_T(x) = -\|x-x_T\|^2$ with $x_0 = (-3,\dots,-3)$ and $x_T = (3,\dots,3)$. The time horizon is $T = 5$ and is discretized with the time step $\delta = 0.5$.

    For our algorithm, we choose quadratic basis functions and test functions with $c=10$, centered at the points of regular grids and we do two tests.
    In both, we count the number of discretization points, and so the number of basis functions, for our algorithm.
    These results have to be compared with the number of basis functions necessary for max-plus method of \cite{akian2008max}, or the number of grid points of the grid-based methods.

    For the first test, we fix the final grid mesh $h$ to $0.2$, so that the final precision is in $\bO(0.04)$,
    we first show the number of max-plus basis functions, when the dimension varies from 2 to 4,
    we give for comparison the number of grid points for an ordinary finite-difference based method:
    \begin{center}
    	\footnotesize
    	\begin{tabular}{c|c|c|c}\label{dim}
    		dimension $d$ & 2 & 3& 4 \\
    		\hline
    		$\sharp$  basis functions & 678 & 5280 & 46500 \\ 
    		\hline
    		$\sharp$ ordinary grid points   & $O(10^5)$ & $O(10^8)$ & $O(10^{10})$		
    	\end{tabular}
    \end{center}
    For the second test, we fix the dimension $d$ to 3, and make the final grid mesh vary from $0.5$ to $0.02$:
    \begin{center}
    	\footnotesize
    	\begin{tabular}{c|c|c|c|c}\label{mesh}			
    		mesh step $h$ & 0.5 & 0.2& 0.05& 0.02 \\
    		\hline 
    		$\sharp$  basis functions& 3170 & 5280 & 22490& 38970   \\
    		\hline
$\sharp$ ordinary grid points & $O(10^6)$ & $O(10^8)$ & $O(10^{10})$ & $O(10^{11})$
    	\end{tabular}
    \end{center}    
       
The algorithm has been implemented in MATLAB with some functions written in C++, and is executed on a single core of a IntelCore I7 at 2.3Gh with 16Gb RAM.
We tried different kinds of linear-quadratic control problems, changing the costs and dynamics, and the numerical tests showed results similar to the above tables. We also observed a similar growth rate of the CPU time. In all cases, the CPU time for a $4$-dimensional problem, with a final grid mesh size $0.2$, discretizing the space $[-5,5]^4$, was approximately of 80 seconds. Whereas the code is not fully optimized,
the computational speed already outperforms standard grid-based methods.
The tables show in particular that the number of basis function
grows moderately with the precision, 
consistently with the estimate of Theorem~\ref{th_complexity}, ii.
\todo[inline,color=red]{SG,MA: SL, thank you very much for the new picture. Actually, it shows a slightly super-linear growth, the function is {\em slighly convex} with the new points, whereas Theorem~\ref{th_complexity} predicts a linear growth. This needs to be further investigated. I think I was imprudent to say that experiments confirm the theorem, this is lacking maturity, so I changed for a more prudent statement, and eliminated the picture, we can keep it for the extended version of the paper, in which we will have more time to check things}

    \section{Conclusion}
    We introduced a new approximation method in optimal control, combining max-plus techniques (approximation of the value function by suprema of elementary functions) and dynamic grid refinements around optimal trajectories. This enable us to reduce the search space. In fact, under regularity assumptions, the grid size needed to obtain a $\varepsilon$-approximation
    grows linearly in $(1/\varepsilon)$, for a fixed dimension.
    We presented a first implementation, on a toy example,
    which already shows a considerable speedup by comparison with grid-based methods. We plan to refine the implementation and provide more systematic tests
    in future work.
    \todo[inline]{SG: I created a short conclusion}

\bibliographystyle{alpha}
\newcommand{\etalchar}[1]{$^{#1}$}

\appendix
\end{document}